\tikzset{dot/.style={circle,draw,scale =.5 , fill = black}}
\newcommand{\Z}{\mathbb{Z}}
\newcommand{\sbar}{\overline{s}}
\newcommand{\esye}{\widetilde{s}}
\newcommand{\ang}[1]{\langle #1 \rangle}
\newcommand{\ceil}[1]{\left\lceil #1 \right\rceil}
\newcommand{\floor}[1]{\left\lfloor #1 \right\rfloor}
\newcommand{\abs}[1]{\left|#1\right|}
\theoremstyle{plain}
\newtheorem{thm}{Theorem}[section]
\newtheorem{prop}[thm]{Proposition}
\newtheorem{lemma}[thm]{Lemma}
\theoremstyle{definition}
\newtheorem{definition}[thm]{Definition}
\theoremstyle{remark}
\title{The maximum hook length of $d$-distinct simultaneous core partitions}
\author{Ethan Pesikoff\thanks{Yale University, {\tt ethan.pesikoff@yale.edu}.} \and Benjamin Przybocki\thanks{Stanford University, {\tt benprz@stanford.edu}.} \and Janabel Xia\thanks{MIT, {\tt janabel@mit.edu}.}}
\date{\today}
\begin{document}

\maketitle

\begin{abstract}
    We exactly determine the maximum possible hook length of $(s,t)$-core partitions with $d$-distinct parts when there are finitely many such partitions. Moreover, we provide an algorithm to construct a $d$-distinct $(s,t)$-core partition with this maximum possible hook length.
\end{abstract}

\section{Introduction} \label{sec-intro}

A \emph{partition} is a weakly decreasing tuple of positive integers $\lambda = (\lambda_1, \lambda_2, \dots, \lambda_n)$. The \emph{size} of $\lambda$ is $\lambda_1 + \lambda_2 + \dots + \lambda_n$. Partitions have been studied not only for their number-theoretic and combinatorial properties, but also for their applications to the representation theory of the symmetric group.

A partition can be visualized by its \emph{Young diagram}, which is a left-justified array of cells where row $i$ contains $\lambda_i$ cells for all $i \in [n]$. For each cell, we define its \emph{hook} to be all the cells on its right, all the cells below it, and itself. The \emph{hook length} of a cell is the number of cells in its hook. (See Figure~\ref{fig-young-diagram}.) A notion of interest in representation theory is that of an \emph{$s$-core partition}, a partition whose Young diagram contains no cells with hook length $s$ \cite[Chapter~2]{jk1981}. Throughout this paper, we simply refer to an $s$-core partition as an \emph{$s$-core}.

\begin{figure}[!ht]
    \centering
    \begin{tikzpicture}[scale=0.55]
        \fill [orange] (3, 2) rectangle (4, 3);
        \fill [orange] (3, 3) rectangle (4, 4);
        \fill [orange] (4, 3) rectangle (5, 4);
        \fill [orange] (5, 3) rectangle (6, 4);
        \fill [orange] (6, 3) rectangle (7, 4);
        \fill [orange] (7, 3) rectangle (8, 4);
    
        \draw (0,4) -- (8,4);
        \draw (0,3) -- (8,3);
        \draw (0,2) -- (6,2);
        \draw (0,1) -- (3,1);
        \draw (0,0) -- (1,0);
        
        \draw (0,0) -- (0,4);
        \draw (1,0) -- (1,4);
        \draw (2,1) -- (2,4);
        \draw (3,1) -- (3,4);
        \draw (4,2) -- (4,4);
        \draw (5,2) -- (5,4);
        \draw (6,2) -- (6,4);
        \draw (7,3) -- (7,4);
        \draw (8,3) -- (8,4);
        
        \node at (0.5, 0.5) {1};
        \node at (0.5, 1.5) {4};
        \node at (1.5, 1.5) {2};
        \node at (2.5, 1.5) {1};
        \node at (0.5, 2.5) {8};
        \node at (1.5, 2.5) {6};
        \node at (2.5, 2.5) {5};
        \node at (3.5, 2.5) {3};
        \node at (4.5, 2.5) {2};
        \node at (5.5, 2.5) {1};
        \node at (0.5, 3.5) {11};
        \node at (1.5, 3.5) {9};
        \node at (2.5, 3.5) {8};
        \node at (3.5, 3.5) {6};
        \node at (4.5, 3.5) {5};
        \node at (5.5, 3.5) {4};
        \node at (6.5, 3.5) {2};
        \node at (7.5, 3.5) {1};
    \end{tikzpicture}
    \caption{The Young diagram of $\lambda = (8, 6, 3, 1)$. The orange cells compose a hook, and the numerals indicate the hook length of each cell.}
    \label{fig-young-diagram}
\end{figure}

Anderson~\cite{anderson2002} generalized this notion to that of an \emph{$(s,t)$-core}, which contain no cells with hook length $s$ or $t$. (For example, we can see from Figure~\ref{fig-young-diagram} that $\lambda = (8,6,3,1)$ is a $(7,10)$-core.) In particular, she proved that there are $\binom{s+t}{s}/(s+t)$ such cores when $s$ and $t$ are coprime; otherwise, there are infinitely many. Anderson's result has inspired several research directions related to $(s,t)$-cores (see \cite{ahj2014,nath2017} and \cite[Section~4]{ckns2021} for three surveys on the subject).

One such direction has studied $(s,t)$-cores with distinct parts (see, e.g, \cite{straub2016, ns2017, yqjz2017, zz2017, bny2018, xiong2018}), in which $\lambda_i - \lambda_{i+1} \ge 1$ for all $i \in [n-1]$. We refer to such cores as \emph{distinct} $(s,t)$-cores. More generally, one can study \emph{$d$-distinct} $(s,t)$-cores \cite{sahin2018, kravitz2019, bs-ts2021}, in which $\lambda_i - \lambda_{i+1} \ge d$ for all $i \in [n-1]$. Kravitz~\cite[Lemma~2.4]{kravitz2019} proved that the number of $d$-distinct $(s,t)$-cores is finite if and only if $\gcd(s,t) \le d$, extending Anderson's result. Most work has focused on counting $d$-distinct $(s,t)$-cores, which has only been solved for a few choices of parameters. Similarly, closed-form expressions for the maximum size, maximum number of parts, and maximum possible hook length (also known as \emph{perimeter}) of $d$-distinct $(s,t)$-cores were only known for a few choices of parameters.

The purpose of this paper is to present a closed-form expression for the maximum possible hook length of $d$-distinct $(s,t)$-cores when there are finitely many such cores. Only loose bounds for general $s$ and $t$ were previously known. Our main theorem, proved in Section~\ref{sec-coprime}, handles the case when $s$ and $t$ are coprime.

\begin{thm} \label{thm-max-hook-coprime}
    Let $s,k,d \in \Z_{>0}$ with $s$ and $k$ coprime and $s \ge 2$. Then, the maximum possible hook length $H_d$ of an $(s,s+k)$-core with $d$-distinct parts is
    \[
        H_d(s,k) = \begin{cases}
            s-1 &\quad\text{if} \ k = 1 \ \text{or} \ k,s \leq d \\
            s+k-1 &\quad\text{if} \ 1 < k \le d < s \\
            B-2 &\quad\text{if} \ d < k \ \text{and} \ \sbar \esye \bmod k = 1 \\
            B-s-1 &\quad\text{if} \ 1 < \sbar \esye \bmod k \le d < k\\
            B + k - \sbar \esye -1 &\quad\text{if} \ d < \sbar \esye \bmod k < k-1 \\
            B-1 &\quad\text{if} \ d < \sbar \esye \bmod k = k - 1,
        \end{cases}
    \]
    where
    \begin{align*}
        B &= \floor{\frac{s-1}{k}} \left(k + s\esye\right) + s\left( \ceil{\frac{\sbar\esye-1}{k}} + \esye - 1\right)+\sbar, \\
        \sbar &= s \bmod k, \ \text{and} \\
        \esye &= \min \{\ell \cdot (\sbar)^{-1} \bmod k \mid -d \leq \ell \leq d, \ \ell\neq0\}.
    \end{align*}
\end{thm}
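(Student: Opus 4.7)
The plan is to translate the question into a lattice optimization problem and then carry out a case analysis driven by the quantity $\esye$. To begin, I use the standard correspondence between $(s,t)$-cores and beta-sets: the first-column hook lengths of $\lambda$ form a subset $B \subseteq \Z_{>0}$ that is closed under subtracting $s$ and subtracting $t$ whenever the result remains positive, and $\max B$ equals the maximum hook length of $\lambda$. A partition is $d$-distinct if and only if consecutive elements of $B$ differ by at least $d+1$. Because $\gcd(s,t) = 1$, the set $B$ lies in $\{1,\dots,st-s-t\} \setminus \langle s,t\rangle$, and the maximum hook length equals the largest non-representable $n$ for which the principal down-set $D(n) = \{n - as - bt : a,b \in \Z_{\geq 0},\ n - as - bt > 0\}$ is itself $d$-distinct.

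Next, I substitute $t = s+k$, so that any pairwise difference in $D(n)$ has the form $\alpha s + \beta k$ with $\alpha = (a_1-a_2) + (b_1-b_2)$ and $\beta = b_1 - b_2$ coming from two lattice points $(a_i,b_i)$ in the triangle $T_n := \{(a,b) \in \Z_{\geq 0}^2 : as + bt < n\}$. Since $\gcd(s,k) = 1$, the condition $\abs{\alpha s + \beta k} \in \{1,\dots,d\}$ for some choice of $\beta$ forces $\alpha \sbar \bmod k \in [1,d] \cup [k-d,k-1]$, and the smallest positive $\alpha$ satisfying this is precisely $\esye$ by its definition. Thus the dangerous translations one must exclude from $T_n$ are of two flavors: when $d \geq k$, translations by $(\mp 1,\pm 1)$ (i.e.\ $\alpha = 0$) already contribute a difference of $k \leq d$; when $d < k$, the minimal danger has $\abs\alpha = \esye$ combined with the $\beta$ that minimizes $\abs{\esye s + \beta k}$.

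The theorem then reduces to showing that $H_d(s,k)$ is the largest non-representable $n$ for which $T_n$ admits no dangerous translate. For the upper bound I exhibit, for each non-representable $n$ exceeding the stated $H_d$, an explicit pair of points in $T_n$ realizing a dangerous translate and hence a forbidden pair in $D(n)$. For the lower bound I take $D(H_d)$ as the constructed core and verify directly that $T_{H_d}$ contains no such pair. The six cases come from the possible regimes: $k = 1$ or $k,s \leq d$ makes $\alpha = 0$ translations so restrictive that $T_n$ must degenerate to a single point, giving $H_d = s-1$; $1 < k \leq d < s$ allows exactly one extra row and yields $H_d = s+k-1$; and the four $d < k$ sub-cases depend on whether $\sbar\esye \bmod k$ equals $1$, lies in $(1,d]$, lies in $(d,k-1)$, or equals $k-1$, each corresponding to a different lattice configuration near the upper-right of $T_n$.

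The main obstacle will be the sub-case analysis when $d < k$: identifying the exact boundary lattice points of $T_n$ at which a dangerous translate would first appear, and converting those positions into the closed-form expression for $B$. In particular, the two ingredients $\floor{(s-1)/k}$ and $\ceil{(\sbar\esye-1)/k}$ record, respectively, how many "full rows" of $k$ fit into the horizontal extent of $T_n$ and how the dangerous shift stacks within an incomplete row, while the $s\esye$ and $\sbar$ terms track the vertical offset at which the first collision would occur. Distinguishing whether the dangerous translate enters $T_n$ from the "top" (coset representative $\esye\sbar \bmod k$ small and positive) or from the "right" (representative close to $k-1$) accounts for the $-2$, $-s-1$, $+k - \sbar\esye - 1$, and $-1$ offsets in the four sub-cases, and carrying out this bookkeeping carefully is the technical heart of the argument.
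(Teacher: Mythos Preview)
Your reduction to principal down-sets $D(n)=\langle n\rangle$ is correct, and your identification of $\esye$ as the least positive $\alpha$ for which some $\beta$ makes $|\alpha s+\beta k|\le d$ is exactly the right invariant. From that point on, however, the proposal diverges from the paper and, more importantly, stops well short of a proof.

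The paper does not work in the two-dimensional triangle $T_n$ at all. Its central simplification is the lemma that $\langle n\rangle$ is $d$-distinct if and only if $\langle n\rangle\cap\mathcal{E}$ is, where $\mathcal{E}=\mathcal{P}\cap[s+k-1]$, and that this intersection is always a contiguous interval in a certain total order on $\mathcal{E}$. The problem then collapses to a one-dimensional optimization: among $d$-distinct intervals in a single chain, maximize first the length and then the starting value. The paper partitions $\mathcal{E}$ into ``ledges'' $\mathcal{L}_i$ (residue classes modulo $k$), computes their sizes exactly, and carries out the four-way case split on $\sbar\esye\bmod k$ at that level, where each case is a short paragraph.

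Your plan forgoes this reduction and proposes to verify $d$-distinctness of the full $D(H_d)$ directly. Two things are then missing. For the lower bound you must rule out \emph{every} dangerous pair in $T_{H_d}$, not only the one with $|\alpha|=\esye$; nothing in the outline explains why, once the minimal translate is excluded, the larger dangerous $\alpha$'s cannot still be realized by some pair of lattice points in $T_{H_d}$. For the upper bound you must exhibit a bad pair for \emph{every} non-representable $n>H_d$, and since $\langle n\rangle\not\subseteq\langle n+1\rangle$ in general there is no monotonicity to invoke; the paper sidesteps this entirely via its lexicographic maximization over interval ideals. Finally, the last paragraph only describes in words what the offsets $-2$, $-s-1$, $+k-\sbar\esye-1$, $-1$ ought to mean; without the analogue of the ledge-length computation there is no mechanism to translate the geometry of $T_n$ into the closed form for $B$. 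The strategy is viable in principle, but as written the proposal is an outline whose ``technical heart'' (your phrase) has not been supplied, and the paper's one-dimensional reduction is precisely the idea that makes that heart tractable.
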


Note that we use $a \bmod b$ to denote the modulo operation (remainder of Euclidean division of $a$ by $b$) and $a \pmod b$ to denote $a$ as an element of $\Z/b\Z$.

Then, in Section~\ref{sec-non-coprime}, we extend our result to all $s$ and $t$ satisfying $\gcd(s,t) \le d$, which resolves the problem for all choices of parameters.

\begin{thm} \label{thm-max-hook}
    Let $s,k,d \in \Z_{>0}$ with $s$ and $k$ coprime and $s \ge 2$. Then, for all integers $b \ge 2$ and $0\leq c < b$, we have
    \[
        H_{bd+c}(bs,bk)=\begin{cases}
            b\left(H_{d}\left(s,k\right)+2\right)-1 &\quad\text{if} \ k=1 \ \text{and} \ d<s\\
            b\left(H_{d}\left(s,k\right)+1\right)-1 & \quad\text{if} \ k=1 \ \text{and} \ d\geq s \\
            b\left(H_{d}\left(s,k\right)+2\right)-1 &\begin{aligned}
                \quad\text{if}& \ d < k \ \text{and} \ (\sbar\esye\bmod k = 1 \\
                \quad &\text{or} \ d < \sbar \esye \bmod k = k-1)
            \end{aligned} \\
            b\left(H_{d}\left(s,k\right)+1\right)-1 &\begin{aligned}
                \quad\text{if}& \ k > 1 \ \text{and} \ (1 < \sbar \esye \bmod k \le d \\
                \quad &\text{or} \ (d < \sbar \esye \bmod k < k-1) \ \text{or} \ d\geq k).
            \end{aligned}
        \end{cases}
    \]
\end{thm}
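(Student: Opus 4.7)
The plan is to prove Theorem \ref{thm-max-hook} by establishing matching upper and lower bounds, each reducing the non-coprime problem to the coprime case of Theorem \ref{thm-max-hook-coprime}. A first observation is that any $(bd+c)$-distinct partition is also $bd$-distinct, so $H_{bd+c}(bs,bk) \leq H_{bd}(bs,bk)$; it therefore suffices to verify the upper bound only for $c = 0$, and to exhibit a $(bd+b-1)$-distinct partition attaining the claimed value (which is automatically $(bd+c)$-distinct for all $0 \leq c < b$). This explains the independence from $c$ in the stated formula.

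For the lower bound, we invoke Theorem \ref{thm-max-hook-coprime} (together with the explicit algorithm promised in the abstract) to obtain an optimal $d$-distinct $(s,s+k)$-core $\lambda$ with top-left hook length $H_d(s,k)$. From $\lambda$ we construct $\mu$ by scaling and augmenting its beta-set: multiply every beta-set element by $b$ and then adjoin a short ``staircase'' of either $b-1$ or $2b-1$ additional elements placed below. The scaling ensures $\mu$ remains a $(bs, bs+bk)$-core, since each forbidden hook length $s$ or $s+k$ of $\lambda$ lifts precisely to a forbidden $bs$ or $bs+bk$ of $\mu$, while the staircase enforces $(bd+b-1)$-distinctness. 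The resulting top-left hook length of $\mu$ works out to $b(H_d(s,k)+1)-1$ in most cases and jumps to $b(H_d(s,k)+2)-1$ precisely in those cases where $\lambda$'s beta-set admits simultaneous room for augmentation in both directions.

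For the upper bound, let $\mu$ be any $(bd+c)$-distinct $(bs,bs+bk)$-core, and write $\mu_i = bq_i + r_i$ with $0 \leq r_i < b$. Since $\mu_i - \mu_{i+1} \geq bd$, we deduce $q_i - q_{i+1} \geq d$, so the reduced sequence is $d$-distinct. Using the $b$-abacus description of $\mu$, whose runners give a $b$-tuple of $(s,s+k)$-cores, we relate the top-left hook length of $\mu$ to the top-left hook length of the ``tallest'' runner-partition up to an additive correction of at most $2b-1$ coming from the residues $r_i$ and the interleaving between runners. Applying Theorem \ref{thm-max-hook-coprime} to that runner-partition then yields the claimed upper bound.

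The main obstacle will be the case analysis: matching the four cases in Theorem \ref{thm-max-hook} against the six cases in Theorem \ref{thm-max-hook-coprime}, and in each case correctly determining whether the offset is $b-1$ or $2b-1$. This depends on a delicate analysis of the beta-set of the coprime-optimal $\lambda$, specifically on whether it can simultaneously tolerate an extra large element at the top and an extra small element at the bottom without creating a forbidden hook length. This in turn is governed by the residue $\sbar\esye \bmod k$, which explains why the case split in Theorem \ref{thm-max-hook} mirrors precisely the one in Theorem \ref{thm-max-hook-coprime}.
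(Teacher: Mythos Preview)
Your reduction to $c=0$ and the overall scaling strategy are correct and match the paper. However, two concrete pieces of the plan are off. First, the lower-bound construction: no ``staircase of $b-1$ or $2b-1$ elements'' is adjoined---a run of consecutive integers in the $\beta$-set would immediately destroy $bd$-distinctness. The paper instead uses the affine map $q \mapsto bq + (b-1)$: since a principal order ideal $\ang{x}_b$ lies in a single residue class modulo $b$, one has $\ang{bq+(b-1)}_b = b\,\ang{q}_1 + (b-1)$, and this is $bd$-distinct iff $\ang{q}_1$ is $d$-distinct. Thus the lower bound amounts to taking $q = H_d(s,k)$ or $q = H_d(s,k)+1$ (the latter adds exactly one element, not $\Theta(b)$). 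Second, your upper bound mixes a parts decomposition $\mu_i = bq_i + r_i$ with a $b$-abacus on the $\beta$-set; the former does not produce an $(s,s+k)$-core, and the latter is unnecessary because for the \emph{maximum} hook we only need principal ideals, which live on a single abacus runner. The clean statement is $H_{bd}(bs,bk) = bM + (b-1)$ where $M = \max\{q \ge 0 : \ang{q}_1 \text{ is } d\text{-distinct}\}$.

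The substantive work you defer---deciding whether $M = H_d(s,k)$ or $M = H_d(s,k)+1$---is exactly what drives the case split, and it cannot be resolved with Theorem~\ref{thm-max-hook-coprime} alone. One must know whether $[d-1] \cap \ang{H_d(s,k)}_1 = \emptyset$ (equivalently, whether $\ang{H_d}_1 \cup \{-1\}$ remains $d$-distinct) and whether $-1$ is reachable from $H_d$ by nonnegative combinations of $s$ and $s+k$. The paper extracts both facts from the explicit description of $\ang{H_d} \cap \mathcal{E}$ in Lemma~\ref{lem-best-interval-ideal}: for instance, when $\sbar\esye \bmod k \in \{1,k-1\}$ one checks that $s{+}k{-}1$ or $s{-}1$ lies in $\ang{H_d}$ (giving reachability of $-1$) and that no element of $[d-1]$ does, yielding $M = H_d + 1$; in the remaining cases an explicit small element such as $(\sbar\esye \bmod k)-1$ or $k-(\sbar\esye \bmod k)-1$ is exhibited, forcing $M = H_d$. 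Your proposal does not indicate how you would obtain this structural information, and without it the ``$+1$ versus $+2$'' dichotomy cannot be established.
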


\section{Background}

For a partition $\lambda = (\lambda_1, \lambda_2, \dots, \lambda_n)$, its \emph{$\beta$-set} is
\[
    \beta(\lambda) = \{\lambda_1 + n - 1, \lambda_2 + n - 2, \dots, \lambda_n\}.
\]
Equivalently, $\beta(\lambda)$ is the set of hook lengths of the cells in the first column of the Young diagram of $\lambda$. For example, we can see from Figure~\ref{fig-young-diagram} that $\beta(8,6,3,1) = \{11,8,4,1\}$. Hence, the maximum hook length of a given partition is the greatest element of its $\beta$-set. The function $\beta$ is a bijection from the set of partitions to the set of finite subsets of $\Z_{>0}$.

For our purposes, it's easier to work with $\beta$-sets rather than tuples of parts. This is because of the following characterization of $s$-cores.

\begin{prop}[{\cite[Lemma~2.7.13]{jk1981}}] \label{prop-s-core}
    A partition $\lambda$ is an $s$-core if and only if for all $x \in \beta(\lambda)$ with $x \ge s$, we have $x-s \in \beta(\lambda)$.
\end{prop}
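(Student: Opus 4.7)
The plan is to prove the multiset identity
\[
    \{\text{hook lengths of cells of } \lambda\} = \{b - c : b \in \beta(\lambda),\; c \in \Z_{\ge 0} \setminus \beta(\lambda),\; c < b\},
\]
from which the proposition follows immediately. Indeed, $\lambda$ has a cell of hook length $s$ iff there exist such $b, c$ with $b - c = s$, iff some $b \in \beta(\lambda)$ with $b \ge s$ satisfies $b - s \notin \beta(\lambda)$. Taking the contrapositive yields the statement.

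To prove the identity, I would write $\beta(\lambda) = \{b_1 > b_2 > \cdots > b_n\}$ with $b_i = \lambda_i + n - i$, so that $\{b_k : k > i\}$ are precisely the elements of $\beta(\lambda)$ strictly less than $b_i$. The plan is to show row by row that the hook lengths in row $i$ are $\{1, 2, \dots, b_i\} \setminus \{b_i - b_k : k > i\}$. From the standard formula $h_{i,j} = (\lambda_i - j) + |\{k > i : \lambda_k \ge j\}| + 1$, the leftmost hook length is $h_{i, 1} = b_i$, and moving from column $j$ to column $j+1$ decreases the hook length by $1 + |\{k > i : \lambda_k = j\}|$. A direct computation using $b_k = \lambda_k + n - k$ then shows that the values skipped strictly between $h_{i, j}$ and $h_{i, j+1}$ are exactly $\{b_i - b_k : k > i,\; \lambda_k = j\}$, and the values skipped below $h_{i, \lambda_i}$ are $\{b_i - b_k : k > i,\; \lambda_k = \lambda_i\}$; summed over $j$, the skipped values in row $i$ are $\{b_i - b_k : k > i\}$.

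Rewriting $\{1, \dots, b_i\} \setminus \{b_i - b_k : k > i\}$ as $\{b_i - c : 0 \le c < b_i,\; c \notin \beta(\lambda)\}$ and taking the disjoint union over $i$ yields the claimed multiset identity. The main obstacle is the row-level bookkeeping identifying the skipped hook lengths with $\{b_i - b_k : k > i\}$; this can alternatively be handled by induction on $n$, since removing the last part of $\lambda$ deletes $b_n$ and shifts the remaining $b_i$ down by one, allowing a clean recursion on the hook multiset. Once the identity is in hand, the proposition is immediate.
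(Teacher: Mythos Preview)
The paper does not prove this proposition; it is stated with a citation to \cite[Lemma~2.7.13]{jk1981} and no argument is given. Your proof is correct: the multiset identity you establish---that the hook lengths of $\lambda$ are exactly the differences $b-c$ with $b\in\beta(\lambda)$, $c\in\Z_{\ge 0}\setminus\beta(\lambda)$, and $c<b$---is a classical fact, and your row-by-row derivation of it is sound. In particular, your computation that the values in $\{1,\dots,b_i\}$ omitted by the hook lengths in row $i$ are precisely $\{b_i-b_k : k>i\}$ checks out, including the bookkeeping for the values below $h_{i,\lambda_i}$ (where $b_i-b_k=k-i$ ranges over $\{1,\dots,|\{k>i:\lambda_k=\lambda_i\}|\}$). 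The deduction of the proposition from the identity is then immediate, as you note. So there is nothing to compare against; you have supplied a valid proof where the paper gives only a reference.
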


We can also characterize $d$-distinct partitions in terms of their $\beta$-sets.

\begin{prop}[{\cite[Lemma~2.1]{sahin2018}}]
    A partition $\lambda$ is $d$-distinct if and only if for all $x,y \in \beta(\lambda)$ with $x \neq y$, we have $|x-y| > d$.
\end{prop}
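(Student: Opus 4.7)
The plan is to translate both the $d$-distinct condition and the $\beta$-set condition into inequalities on a common set of integers, and show they are equivalent. Write $\lambda = (\lambda_1, \ldots, \lambda_n)$ and set $x_i := \lambda_i + n - i$, so that $\beta(\lambda) = \{x_1, \ldots, x_n\}$. Because $\lambda$ is weakly decreasing while the correction term $n-i$ is strictly decreasing in $i$, the sequence $x_1 > x_2 > \cdots > x_n$ is strictly decreasing; in particular, the $x_i$ are distinct, and for $i < j$ we have $x_i - x_j = (\lambda_i - \lambda_j) + (j - i)$.

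For the forward direction, I would assume $\lambda$ is $d$-distinct, i.e., $\lambda_k - \lambda_{k+1} \ge d$ for every $k \in [n-1]$, and telescope to get $\lambda_i - \lambda_j \ge d(j-i)$ for all $i < j$. Substituting into the displayed identity for $x_i - x_j$ yields
\[
    x_i - x_j \ge (d+1)(j-i) \ge d + 1 > d,
\]
so every pair of distinct elements of $\beta(\lambda)$ differs by more than $d$, as required.

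For the converse, assume $|x - y| > d$ for all distinct $x, y \in \beta(\lambda)$. It suffices to apply the hypothesis to consecutive pairs $x_i$ and $x_{i+1}$: the identity gives $x_i - x_{i+1} = \lambda_i - \lambda_{i+1} + 1 > d$, and since these quantities are integers, this is equivalent to $\lambda_i - \lambda_{i+1} \ge d$. This holds for every $i \in [n-1]$, so $\lambda$ is $d$-distinct.

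There is essentially no obstacle here: the proof is a bookkeeping argument once the right substitution $x_i = \lambda_i + n - i$ is introduced. The only mildly subtle point is the conversion between the strict inequality $|x - y| > d$ on integers and the non-strict inequality $\lambda_i - \lambda_{i+1} \ge d$, which is handled by the integrality of all quantities involved.
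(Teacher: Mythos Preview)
Your proof is correct. The paper does not supply its own proof of this proposition; it simply cites \cite[Lemma~2.1]{sahin2018}, so there is nothing to compare against beyond noting that your argument is the standard one and is complete as written.
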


Proposition~\ref{prop-s-core} motivates the definition of the following poset, which is implicitly used in \cite{anderson2002}.

\begin{definition}
    Let
    \[
        \mathcal{P}_{s, s+k} = \Z_{>0} \setminus \{x \in \Z_{>0} \mid x = as+b(s+k) \ \text{for some} \ a,b\in \Z_{\geq 0}\}.
    \]
    For $x,y \in \mathcal{P}_{s, s+k}$, let $x \lessdot_{\mathcal{P}_{s, s+k}} y$ if $y-x \in \{s,s+k\}$. Then, $<_{\mathcal{P}_{s, s+k}}$ is the transitive closure of $\lessdot_{\mathcal{P}_{s, s+k}}$.
\end{definition}

An \emph{order ideal} $\mathcal{X}$ is a subset of $\mathcal{P}_{s, s+k}$ such that if $x \in \mathcal{X}$ and $y <_{\mathcal{P}_{s, s+k}} x$, then $y \in \mathcal{X}$. We use $\ang{x}$ to denote the order ideal generated by $x \in \mathcal{P}_{s, s+k}$.

By Proposition~\ref{prop-s-core}, the $\beta$-sets of $(s,s+k)$-cores are exactly the order ideals of $\mathcal{P}_{s, s+k}$. For example, Figure~\ref{fig-p} illustrates the order ideal $\{11,8,4,1\} \subseteq \mathcal{P}_{7,10}$, which gives another way of seeing that $\lambda = (8,6,3,1)$ is a $(7,10)$-core.

\begin{figure}[!ht]
    \centering
    \begin{tikzpicture}[scale=0.8]
        \def\s{7}
        \def\t{10}
        \def\F{\s*\t-\s-\t}
        \foreach \a in {0, 1,...,\t} {
            \foreach \b in {0, 1,...,\s} {
                \pgfmathparse{int(\F-\a*\s-\b*\t)}
                \ifnum \pgfmathresult>0
                    \node[label={\pgfmathresult}] [dot] (\a/\b) at (\b-\a,-\a-\b) {};
                    \ifnum \a > 0
                        \pgfmathtruncatemacro{\prev}{\a-1}
                        \draw [thick] (\a/\b) -- (\prev/\b);
                    \fi
                    \ifnum \b > 0
                        \pgfmathtruncatemacro{\prev}{\b-1}
                        \draw [thick] (\a/\b) -- (\a/\prev);
                    \fi
                \fi
            }
        }
        
        \pgfsetfillopacity{1}
        \pgfsetstrokeopacity{1}
        \pgfsetlinewidth{0.5mm}
        \draw[red, dotted] (-8,-7) -- (-6,-5) -- (-5,-6) -- (-4,-5) -- (-2,-7);
    \end{tikzpicture}
    \caption{The Hasse diagram of $\mathcal{P}_{7,10}$ with the order ideal $\{11,8,4,1\}$ indicated}
    \label{fig-p}
\end{figure}
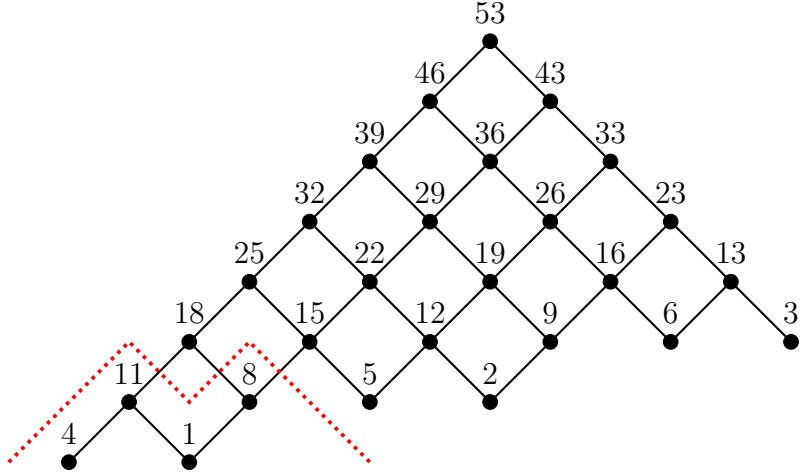

Recall that if $s$ and $k$ are coprime, then the greatest element of $\mathcal{P}_{s, s+k}$ is $M = s(s+k) - s - (s+k)$ \cite{sylvester1882}. Further, every $x \in \mathcal{P}_{s, s+k}$ can be uniquely written as
\[
    x = M - as - b(s+k),
\]
where $a,b \in \Z_{\ge 0}$ \cite[Lemma~3.1]{bny2018}.

\section{The coprime case} \label{sec-coprime}

In what follows, $s,k,d \in \Z_{>0}$ with $s$ and $k$ coprime and $s \ge 2$. We write $\mathcal{P}$ instead of $\mathcal{P}_{s, s+k}$.

The proof of Theorem~\ref{thm-max-hook-coprime} proceeds in two steps. First, in Section~\ref{subsec-interval-ideal}, we reduce the problem of finding the maximum possible hook length to that of finding the best strip along the bottom of $\mathcal{P}$ (what we will call an \emph{interval ideal}) according to two criteria. Then, in Section~\ref{subsec-best-interval-ideal}, we determine the best strip.

\subsection{Reduction to interval ideals} \label{subsec-interval-ideal}

We begin by defining the bottom of $\mathcal{P}$, which we call $\mathcal{E}$, and we impose an order on it.

\begin{definition}
    Let $\mathcal{E} = \mathcal{P} \cap [s+k-1]$. For $x,y\in \mathcal{E}$, let $x \lessdot_{\mathcal{E}} y$ if $y=x+s$ or $y=x-k$. Then, $<_{\mathcal{E}}$ is the transitive closure of $\lessdot_{\mathcal{E}}$.
\end{definition}

Figure~\ref{fig-e} illustrates $\mathcal{P}_{7,10}$ with $\mathcal{E}$ highlighted blue. The order on $\mathcal{E}$ is the left-to-right order in the figure. Thus, one expects that the order on $\mathcal{E}$ is total, which we now prove.

\begin{figure}[!ht]
    \centering
    \begin{tikzpicture}[scale=0.8]
        \def\s{7}
        \def\t{10}
        \def\F{\s*\t-\s-\t}
        \foreach \a in {0, 1,...,\t} {
            \foreach \b in {0, 1,...,\s} {
                \pgfmathparse{int(\F-\a*\s-\b*\t)}
                \ifnum \pgfmathresult>0
                    \node[label={\pgfmathresult}] [dot] (\a/\b) at (\b-\a,-\a-\b) {};
                    \ifnum \a > 0
                        \pgfmathtruncatemacro{\prev}{\a-1}
                        \draw [thick] (\a/\b) -- (\prev/\b);
                    \fi
                    \ifnum \b > 0
                        \pgfmathtruncatemacro{\prev}{\b-1}
                        \draw [thick] (\a/\b) -- (\a/\prev);
                    \fi
                \fi
            }
        }
        
        \pgfsetstrokeopacity{0.3}
        \pgfsetlinewidth{6mm}
        
        \pgfmathparse{floor((\F-1)/\s)}
        \edef\l{\pgfmathresult}
        \pgfmathparse{floor((\F-1)/\t)}
        \edef\r{\pgfmathresult}
        
        \draw[blue] (-\l-0.5,-\l) -- (-5,-\l) -- (-4, -\l+1) -- (0, -\l+1) -- (1, -\r) -- (\r+0.5, -\r);
    \end{tikzpicture}
    \caption{The Hasse diagram of $\mathcal{P}_{7,10}$ with $\mathcal{E}$ highlighted blue}
    \label{fig-e}
\end{figure}
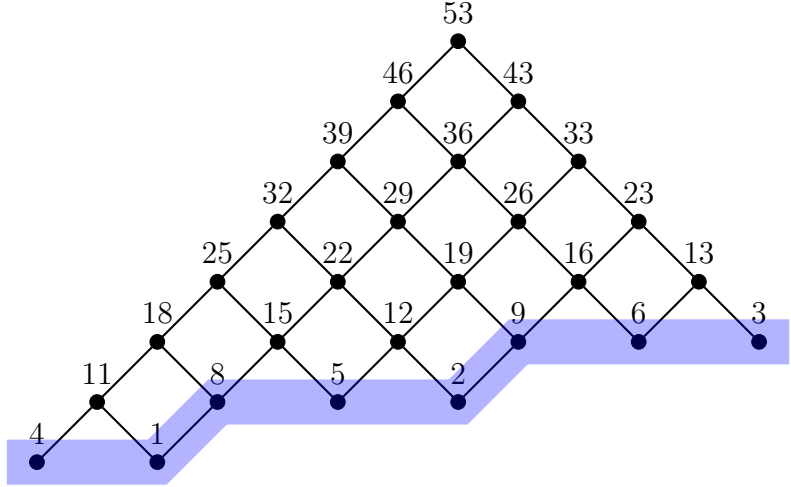

\begin{lemma}
    The order on $\mathcal{E}$ is total.
\end{lemma}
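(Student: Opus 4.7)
The plan is to show that the Hasse diagram of $(\mathcal{E}, <_\mathcal{E})$ is a single chain, which is equivalent to $<_\mathcal{E}$ being total.

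First, I would give an explicit description of $\mathcal{E}$: namely, $\mathcal{E} = \{x \in [s+k-1] : s \nmid x\}$. Indeed, if $x \in [s+k-1]$ is expressible as $as + b(s+k)$ with $a,b \geq 0$, then $b = 0$ (since $b \geq 1$ would force $x \geq s+k$), so $x$ is a positive multiple of $s$; and conversely any positive multiple of $s$ at most $s+k-1$ lies outside $\mathcal{P}$.

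Next, I would determine the upper covers of each $x \in \mathcal{E}$. There are two candidates, $x+s$ and $x-k$. Checking membership: $x+s \in \mathcal{E}$ requires $x+s \leq s+k-1$, i.e., $x \leq k-1$ (the condition $s \nmid x+s$ reduces to $s \nmid x$, which holds); whereas $x - k \in \mathcal{E}$ requires $x-k \geq 1$, i.e., $x \geq k+1$ (and $x - k \leq s-1$ then guarantees $s \nmid x-k$ automatically). The conditions $x \leq k-1$ and $x \geq k+1$ are mutually exclusive, so every $x \in \mathcal{E}$ with $x \neq k$ has exactly one upper cover, while $x = k$ (which lies in $\mathcal{E}$ since $\gcd(s,k)=1$ and $s \geq 2$) has none and is therefore the unique maximum. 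A symmetric case analysis shows that each $x \in \mathcal{E}$ likewise has at most one lower cover, given by $x - s$ when $x \geq s+1$ and otherwise by $x + k$ when $x \leq s-1$ and $s \nmid x+k$.

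Having ruled out branching in either direction, the Hasse diagram decomposes as a disjoint union of (finite) chains. The number of such chains equals the number of maximal elements, which we have shown is exactly one. So the Hasse diagram is a single chain, and $<_\mathcal{E}$ is total.

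The only real obstacle is the bookkeeping in the cover analysis: one must carefully check, in each of the two candidate cover directions, both the boundary condition (staying in $[s+k-1]$) and the divisibility condition ($s \nmid \cdot$), and observe that in the relevant range the divisibility condition is automatic. Everything else is a short counting argument.
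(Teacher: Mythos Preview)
Your cover analysis is correct and matches the paper's: every $x \neq k$ has a unique upper cover, $k$ has none, and every $x$ has at most one lower cover. However, the step ``the Hasse diagram decomposes as a disjoint union of (finite) chains'' has a gap. A finite directed graph in which every vertex has in- and out-degree at most $1$ decomposes into paths \emph{and cycles}, and you have not ruled out the latter. A cycle component would contain no maximal element, so your count ``number of chains $=$ number of maximal elements $= 1$'' would simply miss it; worse, the transitive closure $<_\mathcal{E}$ would then fail to be irreflexive, so it would not even be a partial order.

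The paper addresses exactly this as its first step: if $x <_\mathcal{E} x$, then walking along covers accumulates $a$ steps of $+s$ and $b$ steps of $-k$ with $as - bk = 0$ and $a+b>0$; coprimality of $s$ and $k$ forces $k \mid a$, while your own observation that a $+s$ step must land in $[s+1, s+k-1]$ gives $a \leq k-1$, whence $a = b = 0$, a contradiction. Once you insert this short acyclicity argument (which is the one place $\gcd(s,k)=1$ is genuinely used), the rest of your plan goes through and is essentially identical to the paper's proof.
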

\begin{proof}
    We first prove that $x \not<_{\mathcal{E}} x$ for all $x \in \mathcal{E}$. Suppose for the sake of contradiction that $x <_{\mathcal{E}} x$, so for some sequence of $x_i \in \mathcal{E}$ and $n \ge 2$,
    \[
        x = x_1 \lessdot_{\mathcal{E}} x_2 \lessdot_{\mathcal{E}} \dots \lessdot_{\mathcal{E}} x_n = x.
    \]
    We may assume that $x_1, x_2, \dots, x_{n-1}$ are distinct. Then, $x + as - bk = x$, where
    \begin{align*}
        a &= |\{i \in [n-1] \mid x_{i+1} = x_i + s\}| \quad\text{and} \\
        b &= |\{i \in [n-1] \mid x_{i+1} = x_i - k\}|.
    \end{align*}
    Since $s$ and $k$ are coprime, $k \mid a$. But $x_{i+1} = x_i + s$ implies that $x_{i+1} \in [s+1,s+k-1]$. Thus, $a \le k-1$. It follows that $a=b=0$, a contradiction.
    
    Now, observe that for all $x \in \mathcal{E}$, we have $x+s \in \mathcal{E}$ if and only if $x < k$, and $x-k \in \mathcal{E}$ if and only if $x > k$. Thus, $k$ is the unique maximal element with respect to the order on $\mathcal{E}$. Next, observe that for all $x \in \mathcal{E}$, we have $x-s \in \mathcal{E}$ only if $x > s-1$, and $x+k \in \mathcal{E}$ only if $x \le s-1$. Thus, there is at most one $y \in \mathcal{E}$ with $y \lessdot_{\mathcal{E}} x$. These two facts imply the lemma.
\end{proof}

Next, we define two functions on elements of $\mathcal{P}$.

\begin{definition} \label{def-h}
    Given $x \in \mathcal{P}$, let
    \[
        h(x)=\floor{\frac{x}{s}} + 1.
    \]
\end{definition}

\begin{definition} \label{def-g}
    Given $x \in \mathcal{P}$, let $g(x) = x-(h(x)-1)s = x \bmod s$.
\end{definition}

Intuitively, $h(x)$ measures how long $\ang{x} \cap \mathcal{E}$ is. For example, if $s=7$ and $k=3$, then $h(19) = 3 = |\ang{19} \cap \mathcal{E}|$ as shown in Figure~\ref{fig-hg}. We think of $g(x)$ as the first element of $\ang{x} \cap \mathcal{E}$. If $s=7$ and $k=3$, then $g(19) = 5$, which is the first element of $\ang{19} \cap \mathcal{E}$ as shown in the figure. We now prove these interpretations of $h$ and $g$.

\begin{figure}[!ht]
    \centering
    \begin{tikzpicture}[scale=0.8]
        \def\s{7}
        \def\t{10}
        \def\F{\s*\t-\s-\t}
        \foreach \a in {0, 1,...,\t} {
            \foreach \b in {0, 1,...,\s} {
                \pgfmathparse{int(\F-\a*\s-\b*\t)}
                \ifnum \pgfmathresult>0
                    \node[label={\pgfmathresult}] [dot] (\a/\b) at (\b-\a,-\a-\b) {};
                    \ifnum \a > 0
                        \pgfmathtruncatemacro{\prev}{\a-1}
                        \draw [thick] (\a/\b) -- (\prev/\b);
                    \fi
                    \ifnum \b > 0
                        \pgfmathtruncatemacro{\prev}{\b-1}
                        \draw [thick] (\a/\b) -- (\a/\prev);
                    \fi
                \fi
            }
        }
        
        \pgfsetstrokeopacity{0.3}
        \pgfsetlinewidth{6mm}
        
        \pgfmathparse{floor((\F-1)/\s)}
        \edef\l{\pgfmathresult}
        \pgfmathparse{floor((\F-1)/\t)}
        \edef\r{\pgfmathresult}
        
        \draw[blue] (-2.5, -\l+1) -- (0, -\l+1) -- (1, -\r) -- (1.5, -\r);
        
        \pgfsetfillopacity{1}
        \pgfsetstrokeopacity{1}
        \pgfsetlinewidth{0.5mm}
        \draw[red, dotted] (-3,-6) -- (0,-3);
        \draw[red, dotted] (0,-3) -- (3,-6);
    \end{tikzpicture}
    \caption{The Hasse diagram of $\mathcal{P}_{7,10}$ with $\ang{19}$ indicated and $\ang{19} \cap \mathcal{E}$ highlighted blue}
    \label{fig-hg}
\end{figure}

\begin{lemma} \label{lem-hx}
    For all $x \in \mathcal{P}$, we have $h(x) = |\ang{x} \cap \mathcal{E}|$.
\end{lemma}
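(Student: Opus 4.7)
The plan is to induct on $h(x) = \floor{x/s} + 1$. The base case $h(x) = 1$ forces $1 \leq x \leq s-1$; then $x \in \mathcal{E}$ (as $x < s \leq s+k-1$) while $x-s$ and $x-(s+k)$ are both nonpositive, so $\ang{x} = \{x\}$ and $\abs{\ang{x} \cap \mathcal{E}} = 1$. For the inductive step with $h(x) \geq 2$, I would first observe that $x > s$ (since $x = s$ would give $x \notin \mathcal{P}$) and that $x - s \in \mathcal{P}$: otherwise writing $x - s = as + b(s+k)$ with $a, b \geq 0$ would yield $x = (a+1)s + b(s+k) \notin \mathcal{P}$, a contradiction. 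Hence $h(x-s) = h(x) - 1$, so by induction $\abs{\ang{x-s} \cap \mathcal{E}} = h(x) - 1$. Since $\ang{x-s} \subseteq \ang{x}$, it suffices to show $\abs{(\ang{x} \cap \mathcal{E}) \setminus (\ang{x-s} \cap \mathcal{E})} = 1$.

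To characterize this difference I would use the unique representation $y = M - a's - b'(s+k)$ with $a', b' \in \Z_{\geq 0}$ recalled just before this section. Writing $x = M - as - b(s+k)$ analogously, the condition $y \in \ang{x}$ unfolds to $a' \geq a$ and $b' \geq b$, while $y \in \ang{x-s}$ additionally requires $a' \geq a+1$. So $y \in \ang{x} \setminus \ang{x-s}$ exactly when $a' = a$ and $b' \geq b$, i.e., when $y = x - d(s+k)$ for some integer $d \geq 0$ with $y > 0$ (note that $y \in \mathcal{P}$ is automatic, for if $y = a''s + b''(s+k)$ then $x = a''s + (b''+d)(s+k) \notin \mathcal{P}$).

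It remains to count such $y$ lying in $\mathcal{E}$. The candidates $y_d = x - d(s+k)$ differ by $s+k$ as $d$ varies, so at most one can land in the interval $[1, s+k-1]$ of width $s+k-2$. Setting $q = \floor{x/(s+k)}$, the value $y_q = x \bmod (s+k)$ does land there, and it is nonzero because $x \in \mathcal{P}$ forbids $x$ from being a positive multiple of $s+k$; hence $y_q \in \mathcal{E}$ is the unique new element, completing the induction. The main obstacle is the characterization step, which relies essentially on the uniqueness of the $M - as - b(s+k)$ representation; once that is in hand, the counting reduces to the single observation about the interval $[1, s+k-1]$ above.
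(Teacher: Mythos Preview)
Your proof is correct but takes a different route from the paper. You induct on $h(x)$: at each step you show that passing from $\ang{x-s} \cap \mathcal{E}$ to $\ang{x} \cap \mathcal{E}$ adds exactly the single element $x \bmod (s+k)$, using the unique $M$-representation to identify $\ang{x} \setminus \ang{x-s}$ with the chain $\{x - d(s+k) \mid d \ge 0\} \cap \Z_{>0}$. The paper instead writes down a direct bijection $f: \{x - as \mid 0 \le a \le h(x)-1\} \to \ang{x} \cap \mathcal{E}$, $y \mapsto y \bmod (s+k)$, and checks injectivity and surjectivity in one pass.

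Both arguments rest on the same uniqueness fact and the same reduction modulo $s+k$, so the difference is organizational rather than conceptual. The paper's version has the practical advantage that the explicit map $f$ is reused verbatim in the proof of Lemma~\ref{lem-interval-bijection} (to show that $\ang{x} \cap \mathcal{E}$ is an interval in the $\mathcal{E}$-order); with your inductive proof that step would need to be redone separately. Your approach, on the other hand, makes the incremental picture---one new bottom element per subtraction of $s$---more transparent.
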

\begin{proof}
    Consider the set $A = \{x - as \mid a \in [0,h(x)-1]\}$. It suffices to prove that the map
    \begin{align*}
        f: A &\to \ang{x} \cap \mathcal{E} \\
        y &\mapsto y - \floor{\frac{y}{s+k}}(s+k) = y \bmod (s+k)
    \end{align*}
    is a bijection.
    
    We first prove that $f$ is injective. Every $z \in \ang{x}$ can be uniquely written as
    \[
        z = x - as - b(s+k),
    \]
    where $a,b \in \Z_{\ge 0}$. The elements of $A$ have distinct $s$-coefficients, and $f(y)$ has the same $s$-coefficient as $y$. It follows that $f$ is injective.
    
    It remains to prove that $f$ is surjective. Let
    \[
        z = x - as - b(s+k) \in \ang{x} \cap \mathcal{E},
    \]
    where $a,b \in \Z_{\ge 0}$. Then, $f(x - as) = z$. It follows that $f$ is surjective.
\end{proof}

\begin{lemma} \label{lem-gx}
    For all $x \in \mathcal{P}$, we have $g(x)$ is the first element of $\ang{x} \cap \mathcal{E}$ with respect to the order on $\mathcal{E}$.
\end{lemma}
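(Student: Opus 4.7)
The plan is to set up an order-preserving bijection between $\mathcal{E}$ and an interval of integers, after which the statement reduces to a short pigeonhole on an arithmetic progression. First I would define $\phi : [1, s+k-1] \to [1, s+k-1]$ by $\phi(j) = sj \bmod (s+k)$; since $\gcd(s, s+k) = 1$, $\phi$ is a bijection. The elements of $[1, s+k-1] \setminus \mathcal{P}$ are exactly the multiples of $s$ in that range, namely $\{s, 2s, \ldots, qs\}$ with $q := \floor{(s+k-1)/s}$, and $\phi^{-1}$ sends these to $\{1, 2, \ldots, q\}$, so $\phi$ restricts to a bijection from $[q+1, s+k-1]$ onto $\mathcal{E}$. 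Because $s(j+1) \equiv sj + s \equiv sj - k \pmod{s+k}$, both $\mathcal{E}$-covering moves ($+s$ and $-k$) correspond under $\phi$ to the increment $j \mapsto j+1$; combined with totality of the $\mathcal{E}$-order, this makes $\phi$ an order isomorphism from $[q+1, s+k-1]$ (with its natural order) onto $\mathcal{E}$.

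Next I would verify $g(x) \in \ang{x} \cap \mathcal{E}$: since $x \in \mathcal{P}$ we have $s \nmid x$, so $g(x) = x \bmod s \in [1, s-1] \subseteq \mathcal{P}$ (every nonzero integer of the form $as + b(s+k)$ with $a, b \ge 0$ is at least $s$), hence $g(x) \in \mathcal{E}$, and the identity $g(x) = x - (h(x)-1)s$ exhibits $g(x)$ as an element of $\ang{x}$. By the bijection $f$ constructed in the proof of Lemma~\ref{lem-hx}, every element of $\ang{x} \cap \mathcal{E}$ has the form $e_a := (x - as) \bmod (s+k)$ for a unique $a \in [0, h(x)-1]$, with $e_{h(x)-1} = g(x)$. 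Setting $J_a := \phi^{-1}(e_a) \in [q+1, s+k-1]$, the congruence $sJ_a \equiv x - as \pmod{s+k}$ yields the arithmetic progression $J_a \equiv J_0 - a \pmod{s+k}$.

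The key remaining step is to show this progression does not wrap modulo $s+k$, i.e., $J_a = J_0 - a$ as integers for every $a \in [0, h(x)-1]$. If wrapping occurred, then for some $a^* \in [1, h(x)-1]$ we would have $J_{a^*} \equiv q \pmod{s+k}$ and hence $J_{a^*} = q$, contradicting $J_{a^*} \in [q+1, s+k-1]$. Therefore $J_a = J_0 - a$ throughout, so the minimum of the $J_a$'s in the natural integer order — which by the isomorphism above is the $\mathcal{E}$-order — is attained at $a = h(x)-1$ and equals $J_{h(x)-1} = \phi^{-1}(g(x))$. Pulling back via $\phi$, we conclude that $g(x)$ is the first element of $\ang{x} \cap \mathcal{E}$ in the $\mathcal{E}$-order.

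The main obstacle I anticipate is the bookkeeping in the first paragraph — identifying which $j$'s map into $\mathcal{E}$ under $\phi$, and checking that both $\mathcal{E}$-covering moves collapse to the same increment $j \mapsto j+1$. Once that order isomorphism is in hand, the rest of the argument is essentially a one-line pigeonhole on an arithmetic progression of length $h(x)$ inside $[q+1, s+k-1]$.
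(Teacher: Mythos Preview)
Your argument is correct, but it takes a substantially different route from the paper's proof. The paper argues directly: let $y$ be the first element of $\ang{x}\cap\mathcal{E}$, write $y = x - as - b(s+k)$ uniquely with $a,b\ge 0$, and observe that if $y>s-1$ then $y-s<_{\mathcal{E}}y$ lies in $\ang{x}\cap\mathcal{E}$, while if $b>0$ then $y+k<_{\mathcal{E}}y$ lies in $\ang{x}\cap\mathcal{E}$; both contradict minimality, so $y\le s-1$ and $b=0$, forcing $y=g(x)$. This is three lines using only the definitions. Your approach instead builds an order isomorphism $\phi:[q+1,s+k-1]\to\mathcal{E}$, pulls the set $\ang{x}\cap\mathcal{E}$ back to an arithmetic progression $J_0,J_0-1,\dots,J_0-(h(x)-1)$ modulo $s+k$, and then uses a pigeonhole (the progression cannot leave $[q+1,s+k-1]$ without passing through $q$, which is forbidden) to conclude the minimum sits at $a=h(x)-1$. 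The payoff of your method is that the isomorphism $\phi$ is a genuinely useful structural tool---it linearises $\mathcal{E}$ once and for all and could streamline several later arguments about interval ideals and ledges---but for this particular lemma it is considerably more overhead than the paper's two minimality contradictions.
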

\begin{proof}
    Let $y$ be the first element of $\ang{x} \cap \mathcal{E}$. Then, $y$ can be uniquely written as
    \[
        y = x - as - b(s+k),
    \]
    where $a,b \in \Z_{\ge 0}$. We have $y \le s-1$; otherwise,
    \[
        x - (a+1)s-b(s+k) = y-s <_{\mathcal{E}} y,
    \]
    a contradiction. We also have $b = 0$; otherwise,
    \[
        x - (a+1)s - (b-1)(s+k) = y+k <_{\mathcal{E}} y,
    \]
    a contradiction. It follows that $a = \floor{x/s} = h(x)-1$, so $y = g(x)$, as desired.
\end{proof}

The importance of $h$ and $g$ lies in the following simple observation.

\begin{lemma} \label{lem-hg-lex}
    For all $x,y \in \mathcal{P}$, we have $x < y$ if and only if $(h(x), g(x)) \prec (h(y), g(y))$, where $\prec$ is the lexicographic order.
\end{lemma}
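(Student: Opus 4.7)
The plan is to use the Euclidean-division representation $x = (h(x)-1)s + g(x)$ that comes for free from the definitions. Since $g(x) = x - (h(x)-1)s = x \bmod s$ by Definition~\ref{def-g}, we automatically have $0 \le g(x) \le s-1$. So each $x \in \mathcal{P}$ is encoded by the pair $(h(x), g(x))$ exactly as a two-digit positional numeral in base $s$, and integer order on such numerals is lex order on the digit pairs.

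Concretely, I would argue the ``if'' and ``only if'' simultaneously by splitting on whether $h(x) = h(y)$. First, suppose $h(x) < h(y)$. Then
\[
    x = (h(x)-1)s + g(x) \le (h(x)-1)s + (s-1) = h(x)\cdot s - 1 < h(y)\cdot s - s + 0 \le (h(y)-1)s + g(y) = y,
\]
so $x < y$ and also $(h(x), g(x)) \prec (h(y), g(y))$. Symmetrically if $h(x) > h(y)$ then $x > y$ and the lex inequality reverses. Finally, if $h(x) = h(y)$, then $x - y = g(x) - g(y)$, so $x < y$ iff $g(x) < g(y)$, which in this case is exactly $(h(x), g(x)) \prec (h(y), g(y))$. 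Combining the three cases gives the stated equivalence.

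There is no real obstacle here: the lemma is a restatement of the fact that $(h(x)-1, g(x))$ is the quotient-remainder pair for division of $x$ by $s$, together with $g(x) < s$. The only thing to be careful about is the case $h(x) < h(y)$, where one must use the bound $g(x) \le s-1$ to absorb the gap between $(h(x)-1)s$ and $h(x)\cdot s$; everything else is immediate.
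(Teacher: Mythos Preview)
Your proof is correct and takes essentially the same approach as the paper, which simply observes that $x = g(x) + (h(x)-1)s$ with $h(x)-1$ and $g(x)$ the quotient and remainder of Euclidean division by $s$. You have merely spelled out the case split that the paper leaves implicit.
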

\begin{proof}
    This is clear from $x = g(x) + (h(x)-1)s$, viewing $h(x)-1$ and $g(x)$ as the quotient and remainder respectively of Euclidean division of $x$ by $s$.
\end{proof}

We now define a special kind of strip along $\mathcal{E}$.

\begin{definition}
    We say that $\mathcal{I} \subseteq \mathcal{E}$ is an \emph{interval ideal} if $\mathcal{I}$ is an interval with respect to the order on $\mathcal{E}$ and $\mathcal{I}$ is an order ideal of $\mathcal{P}$.
\end{definition}

The heart of this subsection is the following lemma, which gives the correspondence between elements of $\mathcal{P}$ and interval ideals.

\begin{lemma} \label{lem-interval-bijection}
    Let $\mathfrak{E}$ be the set of nonempty interval ideals. Then, the map
    \begin{align*}
        \pi: \mathcal{P} &\to \mathfrak{E} \\
        x &\mapsto \ang{x} \cap \mathcal{E}
    \end{align*}
    is a bijection. Further, $\ang{x}$ is $d$-distinct if and only if $\ang{x} \cap \mathcal{E}$ is $d$-distinct.
\end{lemma}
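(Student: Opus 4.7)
The plan is to establish the bijection (well-definedness, injectivity, surjectivity) and then the $d$-distinctness equivalence.

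For well-definedness, $\pi(x)$ is nonempty (it contains $g(x)$ by Lemma~\ref{lem-gx}); it is a $<_\mathcal{P}$-order ideal because $\ang{x}$ is one and $\mathcal{E}$ itself is downward closed in $<_\mathcal{P}$ (any $<_\mathcal{P}$-predecessor of an element $\le s+k-1$ is still $\le s+k-1$). For the $<_\mathcal{E}$-interval property I reuse the description from the proof of Lemma~\ref{lem-hx}: $\pi(x) = \{y_a := (x - as) \bmod (s+k) : 0 \le a \le h(x)-1\}$, and the recursion $y_{a+1} = y_a - s$ (when $y_a \ge s$) or $y_a + k$ (when $y_a < s$) is precisely one step down a $<_\mathcal{E}$-cover, so the $y_a$ form a saturated chain of length $h(x)$, hence an interval. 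Injectivity is then immediate since Lemmas~\ref{lem-hx}, \ref{lem-gx}, and \ref{lem-hg-lex} together show that $(h(x), g(x)) = (|\pi(x)|, \min_{<_\mathcal{E}} \pi(x))$ determines $x$.

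For surjectivity, let $\mathcal{I}$ be a nonempty interval ideal, $y_0 := \min_{<_\mathcal{E}} \mathcal{I}$, and $n := |\mathcal{I}|$. First, $y_0 < s$: $y_0 \neq s$ since $s \notin \mathcal{P}$, and if $y_0 > s$ then $y_0 - s \in \mathcal{P}$ (else $y_0$ itself would be expressible) and lies in $\mathcal{E}$, so $y_0 - s <_\mathcal{P} y_0$ forces $y_0 - s \in \mathcal{I}$, contradicting minimality. Define $x := y_0 + (n-1)s$; provided $x \in \mathcal{P}$, one has $g(x) = y_0$ and $h(x) = n$, and then $\pi(x)$ and $\mathcal{I}$ are both $<_\mathcal{E}$-intervals of size $n$ with minimum $y_0$, so they coincide. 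The main obstacle is verifying $x \in \mathcal{P}$: let $\beta_0 \in [1, s-1]$ be the unique residue satisfying $\beta_0 k \equiv y_0 \pmod s$ and set $m := (\beta_0 k - y_0)/s \ge 0$. Any decomposition $x = \alpha s + \beta(s+k)$ with $\alpha, \beta \ge 0$ forces $\beta \equiv \beta_0 \pmod s$, hence $\beta \ge \beta_0$ and $x \ge \beta_0(s+k)$. On the other hand, the minimal nonnegative solution of $y_0 + \alpha s - \beta k = k$ is $\alpha = m,\, \beta = \beta_0 - 1$, and this matches the unique path from $y_0$ up to the top element $k$ in the totally ordered $\mathcal{E}$; hence the number of elements of $\mathcal{E}$ weakly above $y_0$ is $\beta_0 + m$, giving $n \le \beta_0 + m$ and thus $x \le \beta_0(s+k) - s < \beta_0(s+k)$, a contradiction.

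For the $d$-distinctness equivalence, the forward direction is trivial from $\pi(x) \subseteq \ang{x}$. For the converse, assume $y, y' \in \ang{x}$ are distinct with $y > y'$ and $\epsilon := y - y' \le d$; I exhibit distinct $z, z' \in \pi(x)$ with $|z - z'| \le d$. If $d \ge s$, then $\pi(x)$ with $\ge 2$ elements cannot be $d$-distinct (any $<_\mathcal{E}$-adjacent pair has difference $s$ or $k$, and a short case check using $y_0 < s \le d$ shows this difference is at most $d$), while $|\pi(x)| = 1$ forces $|\ang{x}| = 1$, ruling out $y \neq y'$. So take $d < s$ and induct on $h(y) + h(y')$: if both $h(y), h(y') \ge 2$ then $(y - s, y' - s) \in \ang{x}$ preserves $\epsilon$ with smaller sum; if both equal $1$ then $y, y' \in \mathcal{E} = \pi(x)$, done. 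In the remaining case $h(y) = 2, h(y') = 1$ one has $y = g(y) + s$ and $y' < s$, so $\epsilon \le d$ forces $g(y) \le d - 1$, and if $g(y) < k$ then $y \in \mathcal{E}$ as well. The final subcase $g(y) \in [k+1, d-1]$ (requiring $d \ge k+2$) is handled by observing that $g(y), g(y') \in \pi(x)$ with $g(y') - g(y) = s - \epsilon \not\equiv 0 \pmod s$; the sequence of $<_\mathcal{E}$-covers inside $\pi(x)$ connecting $g(y)$ to $g(y')$ must therefore include at least one cover of absolute difference $k$ (otherwise the net displacement would be a nonzero multiple of $s$), yielding a pair in $\pi(x)$ at distance $k \le d - 2 < d$.
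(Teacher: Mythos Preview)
Your proof is correct. Well-definedness and injectivity follow the paper's argument essentially verbatim, but the other two parts take different routes.

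For surjectivity, the paper simply asserts that the candidate $x = y_0 + (n-1)s$ lies in $\mathcal{P}$ because it is the join in $\mathcal{P}$ of the first and last elements of $\mathcal{I}$; no further justification is given. Your argument instead computes the length of the saturated $<_\mathcal{E}$-chain from $y_0$ to the top element $k$ as $\beta_0 + m$, bounds $n$ by this, and then derives the numerical contradiction $x \le \beta_0(s+k)-s < \beta_0(s+k) \le x$. This is longer but entirely self-contained and does not appeal to the existence of joins in $\mathcal{P}$.

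For the converse of the $d$-distinctness equivalence, the paper reduces (by repeatedly subtracting $s$) until one of the two offending elements is at most $s-1$, and then splits on whether the other lies in $\mathcal{E}$; in the residual case it observes $d > k$ and invokes a $k$-step inside $\pi(x)$. Your induction on $h(y)+h(y')$ is the same reduction, and your base case $h(y)=2,\ h(y')=1$ with $g(y)\in[k+1,d-1]$ is exactly the paper's residual case. Your treatment of it is more explicit: rather than pointing to ``any two adjacent elements of $\mathcal{E}$ that differ by $k$'', you locate a specific $-k$ cover on the saturated $<_\mathcal{E}$-chain inside $\pi(x)$ joining $g(y)$ and $g(y')$, using that the net displacement $s-\epsilon$ is not a multiple of $s$. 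This makes transparent why such a cover must lie in $\pi(x)$, a point the paper leaves implicit.
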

\begin{proof}
    We first prove that $\ang{x} \cap \mathcal{E}$ is a nonempty interval ideal. Since $\ang{x}$ is non-empty, it must have a minimal element with respect to the order on $\mathcal{P}$. Thus, $\ang{x} \cap \mathcal{E}$ is nonempty. Since $\ang{x}$ and $\mathcal{E}$ are order ideals of $\mathcal{P}$, we have that $\ang{x} \cap \mathcal{E}$ is an order ideal of $\mathcal{P}$. Recall from Lemma~\ref{lem-hx} that $f(A) = \ang{x} \cap \mathcal{E}$. Thus, to prove that $\ang{x} \cap \mathcal{E}$ is an interval with respect to the order on $\mathcal{E}$, it suffices to prove that $f(x-(a+1)s) \lessdot_{\mathcal{E}} f(x-as)$ for all $a \in [0,h(x)-2]$. If $f(x-as) \le s-1$, then
    \[
        f(x-(a+1)s) = f(x-as)-s+(s+k) = f(x-as)+k \lessdot_{\mathcal{E}} f(x-as). 
    \]
    If $f(x-as) > s-1$, then
    \[
        f(x-(a+1)s) = f(x-as)-s \lessdot_{\mathcal{E}} f(x-as).
    \]
    
    We now prove that $\pi$ is injective. Let $\mathcal{I}$ be a nonempty interval ideal. By Lemmas~\ref{lem-hx} and \ref{lem-gx}, $\mathcal{I}$ uniquely determines $h(x)$ and $g(x)$ for any $x$ with $\pi(x) = \mathcal{I}$. But then, $\mathcal{I}$ uniquely determines $x = g(x) + (h(x)-1)s$, so $\pi$ is injective. Since $x$ is the join of the first and last elements of $\mathcal{I}$, we have $x \in \mathcal{P}$. Then, $\pi(x) = \mathcal{I}$, so $\pi$ is surjective.
    
    It remains to prove that $\ang{x}$ is $d$-distinct if and only if $\ang{x} \cap \mathcal{E}$ is $d$-distinct. It is clear that if $\ang{x}$ is $d$-distinct, then $\ang{x} \cap \mathcal{E}$ is $d$-distinct. Conversely, suppose $\ang{x}$ is not $d$-distinct. Let $y,z \in \ang{x}$ with $0 < |y-z| \le d$. If $y, z > s-1$, then $y-s, z-s \in \ang{x}$ with $0 < |(y-s)-(z-s)| \le d$. Thus, we may assume that $y \le s-1$ or $z \le s-1$. Without loss of generality, assume that $y \le s-1$. If $z \le s+k-1$, then $y,z \in \ang{x} \cap \mathcal{E}$, so $\ang{x} \cap \mathcal{E}$ is not $d$-distinct. If $z > s+k-1$, then $d > k$, in which case any two adjacent elements of $\mathcal{E}$ that differ by $k$ are within $d$ of each other. Since $x > s+k-1$ in this case, $\ang{x} \cap \mathcal{E}$ is not $d$-distinct, as desired.
\end{proof}

The following lemma completes the reduction to interval ideals.

\begin{lemma} \label{lem-interval-lex}
    We have $\ang{H_d} \cap \mathcal{E}$ is the interval ideal $\mathcal{I}$ maximizing $(|\mathcal{I}|, \mathcal{I}_1)$ lexicographically over all $d$-distinct interval ideals, where $\mathcal{I}_1$ is the first element of $\mathcal{I}$ with respect to the order on $\mathcal{E}$.
\end{lemma}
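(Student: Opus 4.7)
The plan is to assemble the previous three lemmas almost mechanically. First I would reduce the problem to looking at principal order ideals. Since $\beta(\lambda)$ is an order ideal of $\mathcal{P}$ whose greatest numerical element is the maximum hook length $H_d$, the ideal $\ang{H_d}$ sits inside $\beta(\lambda)$, and so $\ang{H_d}$ must itself be $d$-distinct. Conversely, for any $x \in \mathcal{P}$ such that $\ang{x}$ is $d$-distinct, the partition with $\beta$-set equal to $\ang{x}$ is a $d$-distinct $(s,s+k)$-core with maximum hook length $x$. Therefore $H_d$ is characterized as the largest (numerically) element $x \in \mathcal{P}$ such that $\ang{x}$ is $d$-distinct.

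Next I would translate both the ``$d$-distinct'' condition and the numerical comparison over to the interval ideal side. By Lemma~\ref{lem-interval-bijection}, the map $\pi: x \mapsto \ang{x} \cap \mathcal{E}$ is a bijection $\mathcal{P} \to \mathfrak{E}$, and it carries the $d$-distinct elements of $\mathcal{P}$ (in the sense that $\ang{x}$ is $d$-distinct) to the $d$-distinct interval ideals. So maximizing $x$ subject to $\ang{x}$ being $d$-distinct is the same as picking an interval ideal $\mathcal{I} = \pi(x)$, and the only remaining point is to express the numerical order on $\mathcal{P}$ in terms of $\mathcal{I}$.

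This is handled by the remaining lemmas. Lemma~\ref{lem-hg-lex} says $x < y$ numerically if and only if $(h(x), g(x)) \prec (h(y), g(y))$ lexicographically. Lemma~\ref{lem-hx} gives $h(x) = |\ang{x} \cap \mathcal{E}| = |\pi(x)|$, and Lemma~\ref{lem-gx} gives $g(x) = \pi(x)_1$, the first element of $\pi(x)$ with respect to the order on $\mathcal{E}$. Substituting, the numerical order on $\mathcal{P}$ corresponds exactly to the lexicographic order on $(|\mathcal{I}|, \mathcal{I}_1)$ under $\pi$. Combining everything: $H_d$ is the image under $\pi^{-1}$ of the $d$-distinct interval ideal $\mathcal{I}$ that maximizes $(|\mathcal{I}|, \mathcal{I}_1)$ lexicographically, which is precisely the stated claim.

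There is no real obstacle here; the content is the bookkeeping of chaining four lemmas together. The only subtle point worth stating carefully is the reduction in the first paragraph, namely the reason it suffices to consider principal order ideals $\ang{x}$ instead of arbitrary order ideals having $x$ as their largest element: since enlarging an ideal can only destroy $d$-distinctness, $\ang{x}$ being $d$-distinct is both necessary (it is contained in every such $\beta$-set) and sufficient (it is itself a valid $\beta$-set).
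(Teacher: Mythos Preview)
Your proposal is correct and follows essentially the same approach as the paper, which simply states that the lemma is immediate from Lemmas~\ref{lem-hg-lex}, \ref{lem-hx}, \ref{lem-gx}, and \ref{lem-interval-bijection}. Your writeup merely unpacks this chain of implications and makes explicit the (implicit) reduction to principal order ideals, which is a helpful clarification but not a different method.
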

\begin{proof}
    This is immediate from Lemmas~\ref{lem-hg-lex}, \ref{lem-hx}, \ref{lem-gx}, and \ref{lem-interval-bijection}.
\end{proof}

\subsection{Finding the best interval ideal} \label{subsec-best-interval-ideal}

By Lemma~\ref{lem-interval-lex}, our goal is now to find the longest interval ideal, using the magnitude of its first element as a tiebreaker.

First, we partition the elements of $\mathcal{E}$ according to their residue classes modulo $k$.

\begin{definition}
    The \emph{ledge} $\mathcal{L}_i$ is the set
    \[
         \mathcal{L}_i = \{x \in \mathcal{E} \mid x \equiv i\pmod k\}.
    \]
\end{definition}

Figure~\ref{fig-ledges} illustrates $\mathcal{P}_{7,10}$ with its ledges color-coded. We see that $\mathcal{L}_1$ is red, $\mathcal{L}_2$ is green, and $\mathcal{L}_0$ is blue. In general, $\mathcal{L}_i$ immediately precedes $\mathcal{L}_{i+\sbar}$, unless $i \equiv 0 \pmod k$, in which case $\mathcal{L}_i$ is the last ledge in $\mathcal{P}$.

The following lemma gives the size of each ledge.

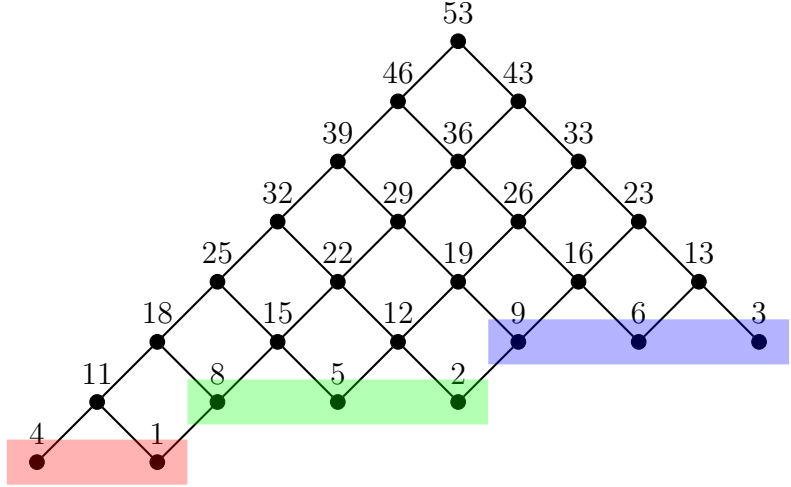
\begin{figure}[!ht]
    \centering
    \begin{tikzpicture}[scale=0.8]
        \def\s{7}
        \def\t{10}
        \def\F{\s*\t-\s-\t}
        \foreach \a in {0, 1,...,\t} {
            \foreach \b in {0, 1,...,\s} {
                \pgfmathparse{int(\F-\a*\s-\b*\t)}
                \ifnum \pgfmathresult>0
                    \node[label={\pgfmathresult}] [dot] (\a/\b) at (\b-\a,-\a-\b) {};
                    \ifnum \a > 0
                        \pgfmathtruncatemacro{\prev}{\a-1}
                        \draw [thick] (\a/\b) -- (\prev/\b);
                    \fi
                    \ifnum \b > 0
                        \pgfmathtruncatemacro{\prev}{\b-1}
                        \draw [thick] (\a/\b) -- (\a/\prev);
                    \fi
                \fi
            }
        }
        
        \pgfsetstrokeopacity{0.3}
        \pgfsetlinewidth{6mm}
        
        \pgfmathparse{floor((\F-1)/\s)}
        \edef\l{\pgfmathresult}
        \pgfmathparse{floor((\F-1)/\t)}
        \edef\r{\pgfmathresult}
            
        \draw[red] (-\l-0.5,-\l) -- (-4.5,-\l);
        \draw[green] (-4.5, -\l+1) -- (0.5, -\l+1);
        \draw[blue] (0.5, -\r) -- (\r+0.5, -\r);
    \end{tikzpicture}
    \caption{The Hasse diagram of $\mathcal{P}_{7,10}$ with its ledges color-coded}
    \label{fig-ledges}
\end{figure}

\begin{lemma} \label{lem-ledge-lengths}
    For all $i \in [0,k-1]$, we have
    \[
        \abs{\mathcal{L}_i}=\begin{dcases}
            0 &\quad\text{if} \ s \mid i \ \text{and} \ i > 0\\
            \left\lfloor\frac{s-1}{k}\right\rfloor &\quad\text{if} \ i = \sbar\\
            1 &\quad\text{if} \ i=\ceil{k/s}s \bmod k \ \text{and} \ k>s\\
            \left\lfloor\frac{s-1}{k}\right\rfloor + 1 &\quad\text{if} \ i=0 \ \text{and} \ k > 1 \\
            \left\lfloor\frac{s-1}{k}\right\rfloor+1 &\quad\text{if} \ \sbar < i \ \text{and} \ s \nmid i\\
            \left\lfloor\frac{s-1}{k}\right\rfloor+2 &\quad\text{if} \ 0 < i < \sbar \ \text{and} \ i \neq \ceil{k/s}s \bmod k.
        \end{dcases}
    \]
\end{lemma}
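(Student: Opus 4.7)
The plan is to write $|\mathcal{L}_i|$ as a difference of two elementary counts. Any positive integer of the form $as + b(s+k)$ with $b \ge 1$ exceeds $s+k-1$, so $\mathcal{E}$ is exactly the set of integers in $[1, s+k-1]$ that are not positive multiples of $s$. Setting
\begin{align*}
    N_i &= \abs{\{x \in [1, s+k-1] \mid x \equiv i \pmod{k}\}}, \\
    M_i &= \abs{\{x \in [1, s+k-1] \mid x \equiv i \pmod{k},\ s \mid x\}},
\end{align*}
we have $|\mathcal{L}_i| = N_i - M_i$, reducing the problem to elementary counting.

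First I would compute $N_i$ by Euclidean division. Writing $s = \floor{s/k} k + \sbar$, one verifies: $N_0 = \floor{(s-1)/k} + 1$; $N_i = \floor{(s-1)/k} + 2$ for $1 \le i \le \sbar - 1$; $N_i = \floor{(s-1)/k} + 1$ for $\sbar \le i \le \min(s-1, k-1)$; and $N_i = 1$ for $s \le i \le k-1$ (the last range being nonempty only when $k > s$). Next I would compute $M_i$: the multiples of $s$ in $[1, s+k-1]$ are $s, 2s, \dots, Ns$ with $N = \ceil{k/s}$, and since $\gcd(s,k) = 1$ the map $j \mapsto js \bmod k$ is injective on $[1, k]$. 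Hence $M_i \in \{0,1\}$, and $M_i = 1$ iff $i$ lies in $\{\sbar,\ 2\sbar \bmod k,\ \dots,\ N \sbar \bmod k\}$.

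Combining these counts yields the six cases by straightforward bookkeeping. In case~1, $i$ is a positive multiple of $s$ less than $k$ (forcing $k > s$) with $N_i = M_i = 1$, giving $0$. In case~4, the smallest $j \ge 1$ with $k \mid js$ is $j = k > \ceil{k/s}$ (using $s \ge 2$), so $M_0 = 0$ and $|\mathcal{L}_0| = N_0$. When $k < s$ we have $\ceil{k/s} = 1$, making $\sbar$ the unique residue with $M_i = 1$; this directly gives case~2 and forces $M_i = 0$ in cases~5 and~6. When $k > s$ we have $\sbar = s$, so the residues $\sbar, 2\sbar, \dots, (N-1)\sbar$ are precisely the positive multiples of $s$ below $k$ (already accounted for by case~1), while the wrap-around residue $N\sbar - k = \ceil{k/s} s \bmod k$ lies in $[1, s-1] = [1, \sbar-1]$ and yields case~3 with $|\mathcal{L}_i| = 2 - 1 = 1$. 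The main obstacle is tracking this ``stolen'' residue: one must verify that case~3 is carved out of case~6 exactly when $k > s$ and confirm $M_i = 0$ for the remaining $i \in [1, \sbar-1]$ and for all $i > \sbar$ with $s \nmid i$. Once this bookkeeping is handled, the six formulas drop out from plugging the corresponding $N_i$ and $M_i$ values into $|\mathcal{L}_i| = N_i - M_i$.
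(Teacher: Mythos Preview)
Your proof is correct and takes a genuinely different route from the paper. The paper argues case by case: in each of the six cases it fixes $i$, writes the candidate elements of $\mathcal{L}_i$ as $i+bk$, and determines directly for which $b$ one has $1\le i+bk\le s+k-1$ and $s\nmid i+bk$, checking the needed divisibility facts by hand. You instead prove once that $\mathcal{E}=[1,s+k-1]\setminus\{s,2s,\dots,\ceil{k/s}s\}$ and split $|\mathcal{L}_i|=N_i-M_i$, reducing everything to two uniform counts; the six cases then emerge from pairing the possible values of $N_i$ (governed by where $i$ sits relative to $\sbar$) with the possible values of $M_i\in\{0,1\}$ (governed by the residues $\sbar,2\sbar,\dots,\ceil{k/s}\sbar$ modulo $k$). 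Your decomposition makes the structure of the answer more transparent---in particular, it explains cleanly why the exceptional Case~3 residue is ``stolen'' from the Case~6 range exactly when $k>s$, and why the exclusion in Case~6 is vacuous when $k<s$ (since then $\ceil{k/s}s\bmod k=\sbar$). The paper's approach, on the other hand, is more self-contained within each case and avoids the small bookkeeping step of matching $(N_i,M_i)$ pairs to the six listed conditions.
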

\begin{proof}
    \textbf{Case I:} $s \mid i \ \text{and} \ i > 0$. In this case, $i \notin \mathcal{P}$. Then, $i+k$ is also a linear combination of $s$ and $s+k$, so $i+k \notin \mathcal{P}$. Since $s<k$, we have $i+2k \geq s+k$, so $i+2k\notin \mathcal{E}$. Then no integer congruent to $i \pmod k$ is in $\mathcal{E}$, and thus $|\mathcal{L}_i| = 0$.
    
    \textbf{Case II:} $i = \sbar$. First, suppose $k > 1$. We have
    \begin{align*}
         \sbar + \floor{\frac{s-1}{k}} k &= (\sbar-1) + \floor{\frac{s-1}{k}} k + 1 \\
         &= (s-1 \bmod k) + \floor{\frac{s-1}{k}} k + 1 \\
         &= (s-1) + 1 = s \notin \mathcal{E}.
    \end{align*}
    Thus, for all $b > \floor{(s-1)/k}$, we have $\sbar + bk \ge s+k$, so $\sbar + bk \notin \mathcal{E}$. And for $0 \le b < \floor{(s-1)/k}$, we have $0 < \sbar + bk < s$, so $\sbar + bk \in \mathcal{E}$. Thus, $|\mathcal{L}_i| = \floor{(s-1)/k}$.
    
    If $k=1$, then
    \[
        \sbar + \left(\floor{\frac{s-1}{k}} + 1\right) k = s \notin \mathcal{E}.
    \]
    Thus, for all $b > \floor{(s-1)/k} + 1$, we have we have $\sbar + bk \ge s + k$, so $\sbar + bk \notin \mathcal{E}$. And for all $0 < b < \floor{(s-1)/k} + 1$, we have $0 < \sbar + bk < s$, so $\sbar + bk \in \mathcal{E}$. Thus, $|\mathcal{L}_i| = \floor{(s-1)/k}$.
    
    \textbf{Case III:} $i = \ceil{k/s}s \bmod k \ \text{and} \ k>s$. We have
    \[
        k < \ceil{\frac{k}{s}}s < k + s.
    \]
    Hence,
    \[
        0 < \ceil{\frac{k}{s}}s \bmod k < s,
    \]
    so $i \in \mathcal{E}$. Then, $i+k=\ceil{k/s}s$, so $i+k \notin \mathcal{P}$. We also have $i+2k > 2k > s+k$, so $i+2k \notin \mathcal{E}$. Thus, $|\mathcal{L}_i| = 1$.
    
    \textbf{Case IV:} $i = 0 \ \text{and} \ k > 1$. We have
    \[
        s-1 < \left(\floor{\frac{s-1}{k}} + 1\right)k \le s+k-1.
    \]
    In fact, $s \nmid \left(\floor{(s-1)/k} + 1\right)k$, because $s$ and $k$ are coprime and $\floor{(s-1)/k} + 1 < s$. Thus, for all $0 < b \le \floor{(s-1)/k} + 1$, we have $bk \in \mathcal{E}$. Further, if $b > \floor{(s-1)/k} + 1$, then $bk > s+k-1$, so $bk \notin \mathcal{E}$. Thus, $|\mathcal{L}_i| = \floor{(s-1)/k} + 1$.
    
    \textbf{Case V:} $\sbar < i \ \text{and} \ s \nmid i$. We have
    \[
        s < i + s - \sbar = i + \left(\frac{s-1}{k} - \frac{\sbar - 1}{k}\right)k = i + \floor{\frac{s-1}{k}}k < s+k-1.
    \]
    If $s < k$, then $s = \sbar$, so $s \nmid i + s - \sbar$. If $s > k$, then no integers strictly between $s$ and $s+k-1$ are multiples of $s$, so again $s \nmid i + s - \sbar$. In either case, $s \nmid i + \floor{(s-1)/k}k$. Thus, for all $0 \le b \le \floor{(s-1)/k}$, we have $i+bk \in \mathcal{E}$. Further, if $b > \floor{(s-1)/k}$, we have $i + bk > s+k-1$, so $i+bk \notin \mathcal{E}$. Thus, $|\mathcal{L}_i| = \floor{(s-1)/k} + 1$.
    
    \textbf{Case VI:} $0 < i < \sbar \ \text{and} \ i \neq \ceil{k/s}s \bmod k$. We have
    \[
        s < i + \left(\floor{\frac{s-1}{k}} + 1\right)k = i + \left(\frac{s-1}{k} - \frac{\sbar - 1}{k} + 1\right)k = i + s - \sbar + k < s + k.
    \]
    If $s \mid i + s - \sbar + k$, then $k > s$, so $s=\sbar$ and $s \mid i + k$. Hence, $i = \ceil{k/s}s - k = \ceil{k/s}s \bmod k$, a contradiction. Thus, $s \nmid i + \left(\floor{(s-1)/k} + 1\right)k$, so for all $0 \le b \le \floor{(s-1)/k} + 1$, we have $i + bk \in \mathcal{E}$. Further, if $b > \floor{(s-1)/k} + 1$, we have $i + bk > s + k - 1$, so $i + bk \notin \mathcal{E}$. Thus, $|\mathcal{L}_i| = \floor{(s-1)/k} + 2$.
\end{proof}

One should think of the first three cases of Lemma~\ref{lem-ledge-lengths} as edge cases. In these cases, the ledge is either empty or the first or last ledge in $\mathcal{P}$. The final two cases are the main cases. The upshot is that, ignoring edge cases, there are two kinds of ledges: short ledges and long ledges. Still ignoring edge cases, $\mathcal{L}_i$ is long according to whether $i \in [\sbar - 1]$.

Before proceeding, the following notation for an interval that wraps around modulo $k$ will be useful.

\begin{definition}
    Given $a,b \in \Z$, let
    \[
        (a,b)_k = \begin{cases}
            (a \bmod k,b \bmod k) &\quad\text{if} \ a \bmod k \le b \bmod k \\
            (a \bmod k,k-1] \cup [0,b \bmod k) &\quad\text{if} \ a \bmod k > b \bmod k,
        \end{cases}
    \]
    and similarly for closed and half-open intervals.
\end{definition}

We say that $\mathcal{L}_p$ and $\mathcal{L}_q$ are \emph{within $d$ of each other} if $p - q \in [-d,d]_k$. A first approximation of our strategy for finding the best interval ideal is to choose as many adjacent ledges as possible such that no two are within $d$ of each other. Later we will see that this isn't exactly right, but this approximation motivates the strategy.

The maximum number of adjacent ledges such that no two are within $d$ of each other is given by $\esye$ (pronounced \textsf{ES-yay}). A sequence of $\esye$ adjacent ledges has the form
\[
    \mathcal{L}_i, \mathcal{L}_{i+\sbar}, \dots, \mathcal{L}_{i + \sbar (\esye - 1)},
\]
which motivates the following definition.

\begin{definition}
    An \emph{$\esye$-interval} is a tuple of elements of $\Z/k\Z$ of the form 
    \[
        (i, i+\sbar, \dots, i + \sbar (\esye - 1))
    \]
    for some $i \in \Z/k\Z$.
\end{definition}

To find the best interval ideal, we need to know how many long ledges are in a given sequence of $\esye$ adjacent ledges. Using Lemma~\ref{lem-ledge-lengths}, and ignoring edge cases, this is the same as $|I \cap [\sbar-1]|$, where $I$ is the $\esye$-interval of ledge indices. The next lemma determines the size of this intersection.

\begin{lemma} \label{lem-esye-interval}
    Suppose $d < k$. Let $I_i = (i, i+\sbar, \dots, i + \sbar (\esye - 1))$ be an $\esye$-interval not containing both 0 and $\sbar$. Then,
    \[
        |I_i \cap [\sbar-1]| = \begin{dcases}
            \ceil{\frac{\sbar \esye}{k}} &\quad\text{if} \ i \in (\sbar - \sbar \esye, \sbar)_k \\
            \ceil{\frac{\sbar \esye}{k}}-1 &\quad\text{if} \ i \in [\sbar, \sbar - \sbar \esye]_k.
        \end{dcases}
    \]
    In particular,
    \[
        \max_I |I \cap [\sbar-1]| = \ceil{\frac{\sbar \esye - 1}{k}},
    \]
    where the maximum is taken over all $\esye$-intervals not containing both 0 and $\sbar$.
\end{lemma}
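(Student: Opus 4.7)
The approach is to translate the set-counting question into a count of integer multiples of $k$ inside a real interval. The key observation is that for each $j \in \{0, 1, \ldots, \esye - 1\}$, the residue $(i + j\sbar) \bmod k$ lies in $\{1, \ldots, \sbar - 1\}$ if and only if the open interval $(i + (j-1)\sbar,\, i + j\sbar)$ (of length $\sbar < k$) contains a multiple of $k$, and any such multiple is unique. Summing over $j$, $|I_i \cap [\sbar - 1]|$ equals the number of multiples of $k$ in $\bigsqcup_{j=0}^{\esye-1}(i + (j-1)\sbar,\, i + j\sbar) = (i - \sbar,\, i + (\esye - 1)\sbar) \setminus \{i, i + \sbar, \ldots, i + (\esye - 2)\sbar\}$. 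The hypothesis eliminates the interior lattice points: if some $i + j\sbar$ with $j \le \esye - 2$ were a multiple of $k$, then $(i + j\sbar) \bmod k = 0$ would force $(i + (j+1)\sbar) \bmod k = \sbar \in I_i$, contradicting the assumption. Thus $|I_i \cap [\sbar - 1]|$ equals the number of multiples of $k$ in the single open interval $(i - \sbar,\, i + (\esye - 1)\sbar)$.

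To count this, I would write $\sbar\esye = Qk + w$ with $w = \sbar\esye \bmod k$ (nonzero since $\gcd(\sbar, k) = 1$ and $\esye < k$). The count equals $\lfloor (i + (\esye - 1)\sbar - 1)/k \rfloor - \lfloor (i - \sbar)/k \rfloor$, which takes one of the two values $Q$ or $Q + 1$. Normalizing to $i \in [0, k-1]$ and splitting on whether $i < \sbar$ in combination with the sign of $\sbar - w$, a direct computation shows the count equals $Q + 1$ exactly when $i \bmod k \in (\sbar - w, \sbar)_k$ and equals $Q$ otherwise. Since $Q + 1 = \lceil \sbar\esye / k \rceil$ and $(\sbar - w, \sbar)_k = (\sbar - \sbar\esye, \sbar)_k$, this yields the two stated formulas.

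For the maximum, if $w = 1$ then the open arc $(\sbar - 1, \sbar)_k$ is empty, so every valid $\esye$-interval achieves only $Q = \lceil (\sbar\esye - 1)/k \rceil$. If $w \ge 2$, then $i = \sbar - 1 \in (\sbar - w, \sbar)_k$; I would check $I_{\sbar - 1}$ is valid by observing that if $0 \in I_{\sbar - 1}$ occurs at position $j$, then $(j+1)\sbar \equiv 1 \pmod k$, giving $\sbar^{-1} \bmod k \le \esye$, and combining with $\esye \le \sbar^{-1} \bmod k$ (which follows from $\ell = 1$ in the minimum defining $\esye$) forces $j = \esye - 1$, so $\sbar$ would only appear at the nonexistent position $\esye$. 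Hence the upper-case value $\lceil (\sbar\esye - 1)/k \rceil$ is attained. The main obstacle is the case analysis in the counting step—tracking the modular wrap-around in $(\sbar - w, \sbar)_k$ correctly according to whether $\sbar - w$ is positive, zero, or negative.
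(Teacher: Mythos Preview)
Your approach is correct and genuinely different from the paper's. The paper computes $|I_i \cap [0,\sbar-1]|$ (note the inclusion of $0$) for \emph{all} $i$ by a shifting argument: it writes $|I_i \cap [0,\sbar-1]| - |I_0 \cap [0,\sbar-1]|$ as a telescoping difference that collapses to a single indicator $\chi \in \{-1,0,1\}$, then pins down the two values by observing that the average of $|I_i \cap [0,\sbar-1]|$ over $i \in [0,k-1]$ is exactly $\sbar\esye/k$; only afterward does it remove the contribution of $0$ using the hypothesis. You instead build the hypothesis in from the start: your key observation recasts $|I_i \cap [\sbar-1]|$ as the number of multiples of $k$ in a union of short open intervals, and the ``not both $0$ and $\sbar$'' assumption is precisely what lets you fuse that union into a single open interval $(i-\sbar,\, i+(\esye-1)\sbar)$ without losing lattice points. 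From there the two-value dichotomy drops out of a single floor computation. What your route buys is directness (no averaging, no separate adjustment for $0$); what the paper's route buys is a uniform statement valid even for the excluded $\esye$-intervals. Regarding your stated ``main obstacle'': the wrap-around casework is cleaner than you fear, since writing $a = i-\sbar$ and reducing $\lfloor(a+w-1)/k\rfloor - \lfloor a/k\rfloor$ gives $1$ exactly when $a \bmod k \in [k-w+1,\,k-1]$, i.e.\ $i \in (\sbar-w,\sbar)_k$, in one stroke; you do not need to split on the sign of $\sbar-w$. Your verification that $I_{\sbar-1}$ is valid when $w \ge 2$ is correct; it may be worth noting explicitly that the entries of any $I_i$ are distinct modulo $k$ (so $\sbar$, if it appears at all, appears only at the position after $0$).
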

\begin{proof}
    We actually prove that for all $\esye$-intervals $I_i$,
    \[
        |I_i \cap [0,\sbar-1]| = \begin{dcases}
            \ceil{\frac{\sbar \esye}{k}} &\quad\text{if} \ i \in [\sbar - \sbar \esye, \sbar)_k \\
            \ceil{\frac{\sbar \esye}{k}}-1 &\quad\text{if} \ i \in [\sbar, \sbar - \sbar \esye)_k.
        \end{dcases}
    \]
    This implies the lemma, because if $I_i$ does not contain both 0 and $\sbar$, then $0 \in I_i$ if and only if $i = \sbar - \sbar \esye \bmod k$.
    
    Since $I_i = I_0 + i$, 
    \[
        |I_i \cap [0,\sbar-1]| - |I_0 \cap [0,\sbar-1]| = |I_0 \cap [-i, -1]_k| - |I_0 \cap [\sbar - i, \sbar - 1]_k|.
    \]
    If $x \in [-i, -1]_k$, then $x + \sbar \in [\sbar - i, \sbar - 1]_k$, so
    \[
        |I_0 \cap [-i, -1]_k| - |I_0 \cap [\sbar - i, \sbar - 1]_k| = \chi_{[-i,-1]_k}(\sbar(\esye - 1)) - \chi_{[\sbar - i,\sbar - 1]_k}(0) \eqqcolon \chi.
    \]
    We have $\sbar(\esye - 1) \in [-i,-1]_k$ if and only if $i \in [\sbar - \sbar \esye, 0]_k$, and $0 \in [\sbar - i,\sbar - 1]_k$ if and only if $i \in [\sbar, 0]_k$. Thus,
    \[
        \chi = \begin{cases}
            1 &\quad\text{if} \ i \in [\sbar - \sbar \esye, \sbar)_k \ \text{and} \ 0 < \sbar - \sbar \esye \bmod k < \sbar \bmod k \\
            0 &\begin{aligned}
                \quad\text{if}& \ (i \in [0,\sbar)_k \ \text{and} \ \sbar - \sbar \esye \bmod k = 0) \\
                \quad &\text{or} \ (i \in [\sbar, \sbar - \sbar \esye)_k \ \text{and} \ 0 < \sbar - \sbar \esye \bmod k < \sbar \bmod k) \\
                \quad &\text{or} \ (i \in [\sbar - \sbar \esye, \sbar)_k \ \text{and} \ \sbar \bmod k < \sbar - \sbar \esye \bmod k)
            \end{aligned} \\
            -1 &\begin{aligned}
                \quad\text{if}& \ (i \in [\sbar,0)_k \ \text{and} \ \sbar - \sbar \esye \bmod k = 0) \\
                \quad &\text{or} \ (i \in [\sbar, \sbar - \sbar \esye)_k \ \text{and} \ \sbar \bmod k < \sbar - \sbar \esye \bmod k).
            \end{aligned}
        \end{cases}
    \]
    In particular, $|I_i \cap [0,\sbar-1]| - |I_j \cap [0,\sbar-1]| \in \{-1, 0, 1\}$ for all $i$ and $j$. Since the average of $|I_i \cap [0,\sbar-1]|$ over all $i \in [0,k-1]$ is $\sbar \esye / k$, the lemma follows.
\end{proof}

We are finally ready to determine the best interval ideal.

\begin{lemma} \label{lem-best-interval-ideal}
    Suppose $d < k$. Then, $\ang{H_d} \cap \mathcal{E}$ is the interval ideal $\mathcal{I}$, where $\mathcal{I}$ contains the union of $\esye$ adjacent ledges beginning at $\mathcal{L}_i$---excluding the non-minimal element in $\mathcal{L}_i$, if any---and
    \[
        i = \begin{cases}
            \sbar-2 &\quad\text{if} \ \sbar \esye \bmod k = 1 \\
            \sbar-1 &\quad\text{if} \ 1 < \sbar \esye \bmod k \le d \ \text{or} \ d < \sbar \esye \bmod k = k - 1 \\
            \sbar-\sbar\esye-1 &\quad\text{if} \ d < \sbar \esye \bmod k < k-1.
        \end{cases}
    \]
    If $d < \sbar \esye \bmod k < k-1$, then $\mathcal{I}$ additionally contains the last element of $\mathcal{L}_{i-\sbar}$ and the first element of $\mathcal{L}_i$ with respect to the order on $\mathcal{E}$. If $\sbar \esye \bmod k = 1$ or $d < \sbar \esye \bmod k$, then $\mathcal{I}$ additionally contains the first element of $\mathcal{L}_{i+\sbar \esye}$ with respect to the order on $\mathcal{E}$. These are all the elements in $\mathcal{I}$.
\end{lemma}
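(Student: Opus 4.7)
The plan is to characterize $d$-distinct interval ideals by their ``ledge span'' and then identify the optimal one by case analysis on $\sbar\esye \bmod k$. By Lemma~\ref{lem-interval-lex}, the goal reduces to finding the interval ideal $\mathcal{I}$ maximizing $(|\mathcal{I}|, \mathcal{I}_1)$ lexicographically among $d$-distinct ones. Since $d < k$, two elements of a single ledge differ by a positive multiple of $k$ and so are $d$-distinct automatically; $d$-distinctness is governed entirely by pairs from distinct ledges $\mathcal{L}_p, \mathcal{L}_q$, where such a pair can be within $d$ only when $p - q \in [-d,d]_k \setminus \{0\}$, and only at ``matched heights'' within the two ledges. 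The definition of $\esye$ as the least positive $m$ with $m\sbar \in [-d,d]_k$ then forces any $d$-distinct interval ideal to span at most $\esye$ consecutive ledges completely; beyond $\esye$, one inevitably pairs two fully-included ledges whose residue difference lies in $[-d,d]_k$, producing a matched-height pair witnessing a violation.

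I would then compute the size of such an $\mathcal{I}$ and optimize over placements. Summing ledge sizes from Lemma~\ref{lem-ledge-lengths} over an $\esye$-interval $I_i = (i, i+\sbar, \ldots, i + (\esye-1)\sbar)$ gives, modulo edge cases, a count of $\esye \lfloor(s-1)/k\rfloor + \esye + |I_i \cap [\sbar-1]|$; by Lemma~\ref{lem-esye-interval}, the maximum of $|I_i \cap [\sbar-1]|$ over valid $\esye$-intervals is $\lceil(\sbar\esye - 1)/k\rceil$. Next I would examine whether one can include an ``extra'' element at one or both endpoints. The first (in $\mathcal{E}$-order) element of $\mathcal{L}_{i+\esye\sbar}$ can be appended without conflict iff the residue gap $\sbar\esye \bmod k$ exceeds $d$, and symmetrically for $\mathcal{L}_{i-\sbar}$. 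When $\sbar\esye \bmod k \in \{1, k-1\}$, the extra element conflicts with the $\mathcal{E}$-minimum of $\mathcal{L}_i$ (the ``non-minimal element'' of the lemma, i.e., the largest-valued element in $\mathcal{L}_i$), so one trades it for the extension, preserving size but improving $\mathcal{I}_1$; when $1 < \sbar\esye \bmod k \le d$, no extension is possible; when $d < \sbar\esye \bmod k < k-1$, both endpoints extend cleanly, giving a genuine size gain. Finally, among the $i$'s achieving the maximum long-ledge count, the lemma's specific values of $i$ correspond to the endpoint of the admissible interval from Lemma~\ref{lem-esye-interval} that maximizes $g(x) = \mathcal{I}_1$ numerically.

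The main obstacle will be the detailed bookkeeping across these four subcases: verifying that the described $\mathcal{I}$ is a genuine order ideal of $\mathcal{P}$ (not merely an interval in $\mathcal{E}$), confirming $d$-distinctness near the endpoints after trades, and ruling out any alternative $d$-distinct interval ideal of strictly larger size or strictly larger $\mathcal{I}_1$. One must separately treat the edge cases of Lemma~\ref{lem-ledge-lengths}---the ledges $\mathcal{L}_0$, $\mathcal{L}_\sbar$, and the anomalous situation $k > s$---and ensure the case-by-case choice of $i$ circumvents the hypothesis of Lemma~\ref{lem-esye-interval} that the $\esye$-interval not simultaneously contain $0$ and $\sbar$. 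A subtlety is that while an interval ideal is determined by its first and last elements in the $\mathcal{E}$-order, the first element must lie in $[1, s-1]$ to correspond to a valid $x \in \mathcal{P}$ via the bijection of Lemma~\ref{lem-interval-bijection}; one must check the chosen $\mathcal{I}_1$ always satisfies this.
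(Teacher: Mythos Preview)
Your plan matches the paper's: reduce via Lemma~\ref{lem-interval-lex}, decompose $\mathcal{E}$ into ledges, use Lemmas~\ref{lem-ledge-lengths} and~\ref{lem-esye-interval} to bound the size of a $d$-distinct interval ideal, and then handle endpoints by a four-way case split on $\sbar\esye\bmod k$. The paper executes this by, in each case, writing down the claimed $\mathcal{I}$ explicitly, checking it is a $d$-distinct interval ideal, and then arguing by contradiction that any strictly longer $\mathcal{I}'$ would either contain two specific elements at distance at most $d$ (computed via the explicit formula for the $r$th element of $\mathcal{L}'_j$) or would meet both $\mathcal{L}_0$ and $\mathcal{L}_{\sbar}$ and hence fail to be an interval.

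One point in your outline is off and would cause trouble if followed literally. The exclusion of the non-minimal element of $\mathcal{L}_i$ is not a strategic ``trade'' that improves $\mathcal{I}_1$; removing that element strictly \emph{decreases} $\mathcal{I}_1$. The exclusion is simply forced by the order-ideal requirement you correctly note later (the $\mathcal{E}$-first element of any interval ideal must lie in $[1,s-1]$). Once that element is removed, appending the first element of $\mathcal{L}_{i+\sbar\esye}$ is a pure size gain whenever it avoids conflict with what remains---which is why in the case $\sbar\esye\bmod k = 1$ (residue gap $1\le d$) the extension is still permitted, contrary to your ``iff the residue gap exceeds $d$'' criterion. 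The reason the paper shifts to $i=\sbar-2$ rather than $\sbar-1$ in that case is a \emph{size} consideration (starting at $\sbar-1$ makes the last ledge $\mathcal{L}_0$, after which no extension is possible), not an $\mathcal{I}_1$ consideration. With this corrected, your outline is the paper's argument, and the obstacles you list are exactly the ones the paper works through.
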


Before proving the lemma, we give two examples. First, if $s = 7$, $k=3$, and $d=1$, then $\sbar\esye \bmod k = 1$. The lemma tells us that $\ang{H_1} \cap \mathcal{E}$ starts at the first non-minimal element of $\mathcal{L}_{\sbar-2} = \mathcal{L}_2$ and ends at the first element of $\mathcal{L}_0$. This example is illustrated in Figure~\ref{fig-hg}. Second, if $s = 8$, $k=5$, and $d=2$, then $\sbar\esye \bmod k = 3$. The lemma tells us that $\ang{H_2} \cap \mathcal{E}$ starts at the last element of $\mathcal{L}_{-\sbar\esye-1} = \mathcal{L}_1$ and ends at the first element of $\mathcal{L}_{\sbar-1} = \mathcal{L}_2$. This example is illustrated in Figure~\ref{fig-8-13}.

\begin{figure}[!ht]
    \centering
    \begin{tikzpicture}[scale=0.8]
        \def\s{8}
        \def\t{13}
        \def\F{\s*\t-\s-\t}
        \foreach \a in {0, 1,...,\t} {
            \foreach \b in {0, 1,...,\s} {
                \pgfmathparse{int(\F-\a*\s-\b*\t)}
                \ifnum \pgfmathresult>0
                    \node[label={\pgfmathresult}] [dot] (\a/\b) at (\b-\a,-\a-\b) {};
                    \ifnum \a > 0
                        \pgfmathtruncatemacro{\prev}{\a-1}
                        \draw [thick] (\a/\b) -- (\prev/\b);
                    \fi
                    \ifnum \b > 0
                        \pgfmathtruncatemacro{\prev}{\b-1}
                        \draw [thick] (\a/\b) -- (\a/\prev);
                    \fi
                \fi
            }
        }
        
        \pgfsetstrokeopacity{0.3}
        \pgfsetlinewidth{6mm}
        
        \pgfmathparse{floor((\F-1)/\s)}
        \edef\l{\pgfmathresult}
        \pgfmathparse{floor((\F-1)/\t)}
        \edef\r{\pgfmathresult}
        
        \draw[blue] (-5.5, -\l+1) -- (-5, -\l+1) -- (-4, -\l+2) -- (-2, -\l+2) -- (-1, -\l+3) -- (-0.5, -\l+3);
        
        \pgfsetfillopacity{1}
        \pgfsetstrokeopacity{1}
        \pgfsetlinewidth{0.5mm}
        \draw[red, dotted] (-6,-9) -- (-2,-5);
        \draw[red, dotted] (-2,-5) -- (2,-9);
    \end{tikzpicture}
    \caption{The Hasse diagram of $\mathcal{P}_{8,13}$ with $\ang{25}$ indicated and $\ang{25} \cap \mathcal{E}$ highlighted blue}
    \label{fig-8-13}
\end{figure}

\begin{proof}[Proof of Lemma~\ref{lem-best-interval-ideal}]
    Throughout, we use $\mathcal{L}'_j$ to denote $\mathcal{L}_j$ excluding its non-minimal element, if any.
    
    \textbf{Case I:} $\sbar \esye \bmod k = 1$. In this case,
    \[
        \mathcal{I} = \mathcal{L}'_{\sbar-2} \cup \mathcal{L}_{2\sbar-2} \cup \dots \cup \mathcal{L}_{k-1} \cup \{y\},
    \]
    where $y$ is the first element of $\mathcal{L}_{\sbar-1}$. We first prove that $\mathcal{I}$ is a $d$-distinct interval ideal. Observe that $\mathcal{L}_{\sbar-2}$ and $\mathcal{L}_{\sbar-1}$ are the only ledges intersecting $\mathcal{I}$ that are within $d$ of each other. But $y = s+k-1$, and the greatest element of $\mathcal{L}'_{\sbar-2}$ is $s-2 < s+k-1-d$, so $\mathcal{I}$ is $d$-distinct. The observation also implies that $\mathcal{I} \cap \mathcal{L}_0 = \emptyset$; \emph{a fortiori}, $\mathcal{I}$ does not intersect both $\mathcal{L}_0$ and $\mathcal{L}_{\sbar}$. Thus, $\mathcal{I}$ is an interval with respect to the order on $\mathcal{E}$. Finally, $|\mathcal{L}'_{\sbar-2}| \ge 1$ by Lemma~\ref{lem-ledge-lengths}, so $\mathcal{I}$ is an order ideal of $\mathcal{P}$.
    
    We now prove that $\mathcal{I}$ maximizes $|\mathcal{I}|$ over all $d$-distinct interval ideals. Suppose for the sake of contradiction that there is a $d$-distinct interval ideal $\mathcal{I}'$ with $|\mathcal{I}'| > |\mathcal{I}|$. Let the first element of $\mathcal{I}'$ be the $r$th element of $\mathcal{L}'_j$. Then, by Lemmas~\ref{lem-ledge-lengths} and \ref{lem-esye-interval}, $\mathcal{I}'$ contains the $r$th element of $\mathcal{L}'_{j+1}$. Now, the $r$th element of $\mathcal{L}'_j$ is
    \begin{equation} \label{eqn-rth-elt}
        j + \floor{\frac{s-1-j}{k}}k - (r-1)k,
    \end{equation}
    and the $r$th element of $\mathcal{L}'_{j+1}$ is
    \[
        j+1 + \floor{\frac{s-2-j}{k}}k - (r-1)k.
    \]
    These differ by 1 unless $j+1 \equiv \sbar \pmod k$. If $j+1 \equiv \sbar \pmod k$, then $\mathcal{I}'$ intersects both $\mathcal{L}_0$ and $\mathcal{L}_{\sbar}$ and hence is not an interval with respect to the order on $\mathcal{E}$. Otherwise, $\mathcal{I}'$ is not $d$-distinct, a contradiction.
    
    By Lemma~\ref{lem-interval-lex}, it remains to prove that $\mathcal{I}$ maximizes $\mathcal{I}_1$ over all $d$-distinct interval ideals of size $|\mathcal{I}|$. We have $\mathcal{I}_1 = s - 2$. The only potentially greater value of $\mathcal{I}_1$ is $s-1$, but by Lemmas~\ref{lem-ledge-lengths} and \ref{lem-esye-interval}, an interval ideal $\mathcal{I}'$ with $\mathcal{I}'_1 = s-1$ must satisfy $|\mathcal{I}'| < |\mathcal{I}|$.
    
    \textbf{Case II:} $1 < \sbar \esye \bmod k \le d$. In this case,
    \[
        \mathcal{I} = \mathcal{L}'_{\sbar - 1} \cup \mathcal{L}_{2\sbar - 1} \cup \dots \cup \mathcal{L}_{\sbar \esye - 1}.
    \]
    We first prove that $\mathcal{I}$ is a $d$-distinct interval ideal. It does not intersect any ledges that are within $d$ of each other, so $\mathcal{I}$ is $d$-distinct. Hence, since $\mathcal{I} \cap \mathcal{L}_{\sbar-1} \neq \emptyset$, we have $\mathcal{I} \cap \mathcal{L}_{\sbar} = \emptyset$; \emph{a fortiori}, $\mathcal{I}$ does not intersect both $\mathcal{L}_0$ and $\mathcal{L}_{\sbar}$. Thus, $\mathcal{I}$ is an interval with respect to the order on $\mathcal{E}$. Finally, $|\mathcal{L}'_{\sbar-1}| \ge 1$ by Lemma~\ref{lem-ledge-lengths}, so $\mathcal{I}$ is an order ideal of $\mathcal{P}$.
    
    We now prove that $\mathcal{I}$ maximizes $|\mathcal{I}|$ over all $d$-distinct interval ideals. Suppose for the sake of contradiction that there is a $d$-distinct interval ideal $\mathcal{I}'$ with $|\mathcal{I}'| > |\mathcal{I}|$. Let the first element of $\mathcal{I}'$ be the $r$th element of $\mathcal{L}'_j$. If $j \in (\sbar - \sbar \esye, \sbar)_k$, then by Lemmas~\ref{lem-ledge-lengths} and \ref{lem-esye-interval}, $\mathcal{I}'$ contains the $r$th element of $\mathcal{L}_{j+\sbar \esye}$. Now, the $r$th element of $\mathcal{L}_{j+\sbar \esye}$ is
    \[
        j + \sbar \esye + \floor{\frac{s+k-1-j-\sbar\esye}{k}}k - (r-1)k.
    \]
    This differs from \eqref{eqn-rth-elt} by $\sbar \esye \bmod k$, so $\mathcal{I}'$ is not $d$-distinct, a contradiction in this case. If $j \in [\sbar, \sbar-\sbar\esye]_k$, then by Lemmas~\ref{lem-ledge-lengths} and \ref{lem-esye-interval}, $\mathcal{I}'$ contains the $r$th element of $\mathcal{L}'_{j+\sbar \esye}$. Now, the $r$th element of $\mathcal{L}'_{j+\sbar \esye}$ is
    \begin{equation} \label{eqn-rth-elt-2}
        j + \sbar \esye + \floor{\frac{s-1-j-\sbar\esye}{k}}k - (r-1)k.
    \end{equation}
    This differs from \eqref{eqn-rth-elt} by $\sbar \esye \bmod k$ unless $j+\sbar\esye \equiv \sbar \pmod k$. If $j+\sbar\esye \equiv \sbar \pmod k$, then $\mathcal{I}'$ intersects both $\mathcal{L}_0$ and $\mathcal{L}_{\sbar}$ and hence is not an interval with respect to the order on $\mathcal{E}$. Otherwise, $\mathcal{I}'$ is not $d$-distinct, a contradiction.
    
    By Lemma~\ref{lem-interval-lex}, it remains to prove that $\mathcal{I}$ maximizes $\mathcal{I}_1$ over all $d$-distinct interval ideals of size $|\mathcal{I}|$. This follows from the fact that $\mathcal{I}_1$ is the first element of $\mathcal{L}'_{\sbar-1}$, which is $s-1$.
    
    \textbf{Case III:} $d < \sbar \esye \bmod k = k - 1$. In this case,
    \[
        \mathcal{I} = \mathcal{L}'_{\sbar - 1} \cup \mathcal{L}_{2\sbar - 1} \cup \dots \cup \mathcal{L}_{k-2} \cup \{y\},
    \]
    where $y$ is the first element of $\mathcal{L}_{\sbar-2}$. We first prove that $\mathcal{I}$ is a $d$-distinct interval ideal. Observe that $\mathcal{L}_{\sbar-1}$ and $\mathcal{L}_{\sbar-2}$ are the only ledges intersecting $\mathcal{I}$ that are within $d$ of each other. But $y = s+k-2$, and the greatest element of $\mathcal{L}'_{\sbar-1}$ is $s-1 < s+k-2-d$, so $\mathcal{I}$ is $d$-distinct. The observation also implies that $\mathcal{I} \cap \mathcal{L}_{\sbar} = \emptyset$; \emph{a fortiori}, $\mathcal{I}$ does not intersect both $\mathcal{L}_0$ and $\mathcal{L}_{\sbar}$. Thus, $\mathcal{I}$ is an interval with respect to the order on $\mathcal{E}$. Finally, $|\mathcal{L}'_{\sbar-1}| \ge 1$ by Lemma~\ref{lem-ledge-lengths}, so $\mathcal{I}$ is an order ideal of $\mathcal{P}$.
    
    We now prove that $\mathcal{I}$ maximizes $|\mathcal{I}|$ over all $d$-distinct interval ideals. Suppose for the sake of contradiction that there is a $d$-distinct interval ideal $\mathcal{I}'$ with $|\mathcal{I}'| > |\mathcal{I}|$. Let the first element of $\mathcal{I}'$ be the $r$th element of $\mathcal{L}'_j$. If $j \in (\sbar + 1, \sbar)_k$, then by Lemmas~\ref{lem-ledge-lengths} and \ref{lem-esye-interval}, $\mathcal{I}'$ contains the $r$th element of $\mathcal{L}'_{j-1}$. Now, the $r$th element of $\mathcal{L}'_{j-1}$ is
    \[
        j-1 + \floor{\frac{s-j}{k}}k - (r-1)k.
    \]
    This differs from \eqref{eqn-rth-elt} by 1, so $\mathcal{I}'$ is not $d$-distinct, a contradiction in this case. If $j \in [\sbar, \sbar+1]_k$, then by Lemmas~\ref{lem-ledge-lengths} and \ref{lem-esye-interval}, $\mathcal{I}'$ contains the $(r+1)$th element of $\mathcal{L}'_{j-1}$, which differs from \eqref{eqn-rth-elt} by 1 unless $j-1 \equiv \sbar \pmod k$. If $j-1 \equiv \sbar \pmod k$, then $\mathcal{I}'$ intersects both $\mathcal{L}_0$ and $\mathcal{L}_{\sbar}$ and hence is not an interval with respect to the order on $\mathcal{E}$. Otherwise, $\mathcal{I}'$ is not $d$-distinct, a contradiction.
    
    By Lemma~\ref{lem-interval-lex}, it remains to prove that $\mathcal{I}$ maximizes $\mathcal{I}_1$ over all $d$-distinct interval ideals of size $|\mathcal{I}|$. This follows from the fact that $\mathcal{I}_1$ is the first element of $\mathcal{L}'_{\sbar-1}$, which is $s-1$.
    
    \textbf{Case IV:} $d < \sbar \esye \bmod k < k-1$. In this case,
    \[
        \mathcal{I} = \{x\} \cup \mathcal{L}_{\sbar - \sbar \esye - 1} \cup \mathcal{L}_{2\sbar - \sbar \esye - 1} \cup \dots \cup \mathcal{L}_{k-1} \cup \{y\},
    \]
    where $x$ is the last element of $\mathcal{L}_{-\sbar\esye-1}$ and $y$ is the first element of $\mathcal{L}_{\sbar -1}$. We first prove that $\mathcal{I}$ is a $d$-distinct interval ideal. Observe that $\{\mathcal{L}_{-\sbar\esye-1}, \mathcal{L}_{k-1}\}$, $\{\mathcal{L}_{\sbar - \sbar \esye - 1}, \mathcal{L}_{\sbar -1}\}$, and possibly $\{\mathcal{L}_{-\sbar\esye-1}, \mathcal{L}_{\sbar -1}\}$ are the only pairs of ledges intersecting $\mathcal{I}$ that are within $d$ of each other. But $x = -\sbar\esye - 1 \bmod k = k-1 - (\sbar\esye \bmod k)$, and the least element of $\mathcal{L}_{k-1}$ is $k-1 > k-1 - (\sbar\esye \bmod k) + d$. Similarly, $y = s+k-1$, and the greatest element of $\mathcal{L}_{\sbar - \sbar \esye - 1}$ is $s+k-1-(\sbar\esye \bmod k) < s+k-1-d$. Finally, $k-1 - (\sbar\esye \bmod k) + d < s+k-1$, so $\mathcal{I}$ is $d$-distinct. The observation also implies that $\mathcal{I} \cap \mathcal{L}_0 = \emptyset$; \emph{a fortiori}, $\mathcal{I}$ does not intersect both $\mathcal{L}_0$ and $\mathcal{L}_{\sbar}$. Thus, $\mathcal{I}$ is an interval with respect to the order on $\mathcal{E}$. Finally, $|\mathcal{L}'_{-\sbar\esye-1}| \ge 1$ by Lemma~\ref{lem-ledge-lengths}, so $\mathcal{I}$ is an order ideal of $\mathcal{P}$.
    
    Consider a $d$-distinct interval ideal $\mathcal{I}'$ with $|\mathcal{I}'| \ge |\mathcal{I}|$. Let the first element of $\mathcal{I}'$ be the $r$th element of $\mathcal{L}'_j$. We claim that $j \in (0, -\sbar\esye)_k$ and $|\mathcal{L}'_j| = r$. Suppose not. If $j \in (\sbar - \sbar \esye, \sbar)_k$, then by Lemmas~\ref{lem-ledge-lengths} and \ref{lem-esye-interval}, $\mathcal{I}'$ contains the $r$th element of $\mathcal{L}'_{j+\sbar\esye}$. Now, \eqref{eqn-rth-elt} and \eqref{eqn-rth-elt-2} differ by $k - (\sbar\esye \bmod k) \le d$, so $\mathcal{I}'$ is not $d$-distinct, a contradiction in this case. If $j \in [\sbar, \sbar-\sbar\esye)_k$, then by Lemmas~\ref{lem-ledge-lengths} and \ref{lem-esye-interval}, $\mathcal{I}'$ contains the $(r+1)$th element of $\mathcal{L}'_{j+\sbar\esye}$. Now, the $(r+1)$th element of $\mathcal{L}'_{j+\sbar\esye}$ is
    \[
        j + \sbar \esye + \floor{\frac{s-1-j-\sbar\esye}{k}}k - rk.
    \]
    This differs from \eqref{eqn-rth-elt} by $k - (\sbar\esye \bmod k) \le d$, so $\mathcal{I}'$ is not $d$-distinct, a contradiction in this case. Finally, if $j \equiv \sbar - \sbar\esye \pmod k$, then $\mathcal{I}'$ intersects both $\mathcal{L}_0$ and $\mathcal{L}_{\sbar}$ and hence is not an interval with respect to the order on $\mathcal{E}$, a contradiction.
    
    We now prove that $\mathcal{I}$ maximizes $|\mathcal{I}|$ over all $d$-distinct interval ideals. Suppose for the sake of contradiction that there is a $d$-distinct interval ideal $\mathcal{I}'$ with $|\mathcal{I}'| > |\mathcal{I}|$. By the claim, the first element of $\mathcal{I}'$ is the last element of $\mathcal{L}_j$, where $j \in (0, -\sbar\esye)_k$. Then, by Lemmas~\ref{lem-ledge-lengths} and \ref{lem-esye-interval}, $\mathcal{I}'$ contains the first element of $\mathcal{L}'_{j + \sbar + \sbar\esye}$. Now, the first element of $\mathcal{L}'_{j + \sbar + \sbar\esye}$ is
    \[
        j + \sbar + \sbar\esye + \floor{\frac{s-1-j-\sbar-\sbar\esye}{k}}k.
    \]
    We also have that $\mathcal{I}'$ contains the first element of $\mathcal{L}_{j + \sbar}$, which is
    \[
        j + \sbar + \floor{\frac{s+k-1-j-\sbar}{k}}k.
    \]
    These differ by $k - (\sbar\esye \bmod k) \le d$, so $\mathcal{I}'$ is not $d$-distinct, a contradiction.
    
    By Lemma~\ref{lem-interval-lex}, it remains to prove that $\mathcal{I}$ maximizes $\mathcal{I}_1$ over all $d$-distinct interval ideals of size $|\mathcal{I}|$. We have $\mathcal{I}_1 = x = k-1-(\sbar\esye \bmod k)$, which is maximal by the claim.
\end{proof}

We now prove Theorem~\ref{thm-max-hook-coprime}, which says that the maximum possible hook length $H_d$ of an $(s,s+k)$-core with $d$-distinct parts is
\[
    H_d(s,k) = \begin{cases}
        s-1 &\quad\text{if} \ k = 1 \ \text{or} \ k,s \leq d \\
        s+k-1 &\quad\text{if} \ 1 < k \le d < s \\
        B-2 &\quad\text{if} \ d < k \ \text{and} \ \sbar \esye \bmod k = 1 \\
        B-s-1 &\quad\text{if} \ 1 < \sbar \esye \bmod k \le d < k\\
        B + k - \sbar \esye -1 &\quad\text{if} \ d < \sbar \esye \bmod k < k-1 \\
        B-1 &\quad\text{if} \ d < \sbar \esye \bmod k = k - 1,
    \end{cases}
\]
where
\begin{align*}
    B &= \floor{\frac{s-1}{k}} \left(k + s\esye\right) + s\left( \ceil{\frac{\sbar\esye-1}{k}} + \esye - 1\right)+\sbar, \\
    \sbar &= s \bmod k, \ \text{and} \\
    \esye &= \min \{\ell \cdot (\sbar)^{-1} \bmod k \mid -d \leq \ell \leq d, \ \ell\neq0\}.
\end{align*}
\begin{proof}[Proof of Theorem~\ref{thm-max-hook-coprime}]
    \textbf{Case I:} $k = 1 \ \text{or} \ k,s \leq d$. If $k=1$, adjacent elements of $\mathcal{E}$ are within $d$ of each other, so $\ang{H_d}$ can only have one element in $\mathcal{E}$. Since $s-1$ is the greatest element with this property (given that it is the greatest element of $\mathcal{E}$), $H_d = s-1$.
    
    If $k,s \le d$, adjacent elements of $\mathcal{E}$ are within $d$ of each other, and any element of $\mathcal{P}$ is within $d$ of its children. Hence, $\ang{H_d}$ has only one element. Since $s-1$ is the greatest element with this property (given that it is the greatest minimal element of $\mathcal{P}$), $H_d = s-1$.

    \textbf{Case II:} $1 < k \le d < s$. In this case, adjacent elements of $\mathcal{E}$ that differ by $k$ are within $d$ of each other, so $\langle H_d \rangle$ can only have one minimal element. Since $s+k-1$ is the greatest element with this property (given that it is the greatest element of $\mathcal{E}$), $H_d = s+k-1$.

    \textbf{Cases III--VI:} $d < k$. By Definition~\ref{def-g},
    \[
        H_d = g(H_d) + (h(H_d)-1)s.
    \]
    In each case, we calculate $g(H_d)$ using Lemmas~\ref{lem-best-interval-ideal} and \ref{lem-gx}; we calculate $h(H_d)$ using Lemmas~\ref{lem-best-interval-ideal}, \ref{lem-hx}, \ref{lem-ledge-lengths}, and \ref{lem-esye-interval}.
\end{proof}

\section{Extension to the non-coprime case} \label{sec-non-coprime}

We begin by defining a variant of the notion of an order ideal generated by an element.

\begin{definition}
    Given $x \in \Z_{\ge 0}$, let
    \[
        \ang{x}_b = \{x-a_1bs-a_2b(s+k) \geq 0 \mid a_1,a_2 \in \Z_{\ge 0}\}.
    \]
\end{definition}

Notice that if $x \in \mathcal{P}_{bs, b(s+k)}$, then $\ang{x}_b$ is the order ideal generated by $x \in \mathcal{P}_{bs, b(s+k)}$. This notation gives us additional flexibility by allowing us to vary $b$ and allowing $x$ to not be an element of $\mathcal{P}_{bs, b(s+k)}$.

We first simplify the problem by proving that we may assume that $c=0$.

\begin{lemma} \label{lem-mult-of-b}
    We have $H_{bd+c}(bs,bk) = H_{bd}(bs,bk)$.
\end{lemma}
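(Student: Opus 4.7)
The plan is to prove the two inequalities separately. The direction $H_{bd+c}(bs,bk) \leq H_{bd}(bs,bk)$ is immediate, since every $(bd+c)$-distinct partition is $bd$-distinct when $c \geq 0$.

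For the reverse inequality, I would take a $bd$-distinct $(bs,b(s+k))$-core $\lambda$ attaining $H := H_{bd}(bs,bk)$, set $M = \max\beta(\lambda) = H$, and consider $\mathcal{X}'$, the order ideal of $\mathcal{P}_{bs,b(s+k)}$ generated by $M$. Since $\beta(\lambda)$ is itself an order ideal containing $M$, we have $\mathcal{X}' \subseteq \beta(\lambda)$, so by Proposition~\ref{prop-s-core} applied to both $bs$ and $b(s+k)$, the set $\mathcal{X}'$ is the $\beta$-set of some $(bs,b(s+k))$-core $\lambda'$ with $\max\beta(\lambda') = M$.

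The crucial observation is that every covering relation in $\mathcal{P}_{bs,b(s+k)}$ subtracts either $bs$ or $b(s+k)$, both of which are divisible by $b$. Consequently every element of $\mathcal{X}'$ is congruent to $M$ modulo $b$, so every nonzero pairwise difference within $\mathcal{X}'$ is a positive multiple of $b$. Since $\mathcal{X}'$ inherits $bd$-distinctness from $\beta(\lambda)$, each such difference strictly exceeds $bd$; being a positive multiple of $b$, it is therefore at least $b(d+1)$, which exceeds $bd + c$ because $c < b$. Hence $\lambda'$ is $(bd+c)$-distinct with maximum hook length $M$, yielding $H_{bd+c}(bs,bk) \geq M = H_{bd}(bs,bk)$.

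I do not expect any significant obstacle; the whole argument hinges on the elementary observation that replacing $\beta(\lambda)$ with the principal order ideal generated by its maximum element automatically confines all elements to a single residue class modulo $b$, where $bd$-distinctness upgrades for free to $(bd+b-1)$-distinctness, which is stronger than the required $(bd+c)$-distinctness for every $0 \leq c < b$.
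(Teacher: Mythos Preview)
Your proof is correct and follows essentially the same approach as the paper's: both directions are argued the same way, and the key observation---that all elements of the principal order ideal generated by $H_{bd}(bs,bk)$ lie in a single residue class modulo $b$, so $bd$-distinctness automatically upgrades to $(bd+c)$-distinctness---is identical. If anything, you are more explicit than the paper in justifying why one may pass from the optimal $\beta$-set to the principal order ideal it contains, a step the paper leaves implicit.
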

\begin{proof}
    Since $bd+c\geq bd$, we have $H_{bd+c}(bs,bk) \leq H_{bd}(bs,bk)$. Now, observe that all elements of $\ang{H_{bd}(bs,bk)}_b$ are congruent modulo $b$. Therefore, any two elements of $\ang{H_{bd}(bs,bk)}_b$ within $bd+c$ of each other are also within $bd$ of each other. Hence, $H_{bd}(bs,bk)$ is $(bd+c)$-distinct, so $H_{bd+c}(bs,bk) \geq H_{bd}(bs,bk)$.
\end{proof}

We now prove Theorem~\ref{thm-max-hook}, which says that for all integers $b \ge 2$ and $0\leq c < b$, we have
\[
    H_{bd+c}(bs,bk)=\begin{cases}
        b\left(H_{d}\left(s,k\right)+2\right)-1 &\quad\text{if} \ k=1 \ \text{and} \ d<s\\
        b\left(H_{d}\left(s,k\right)+1\right)-1 & \quad\text{if} \ k=1 \ \text{and} \ d\geq s \\
        b\left(H_{d}\left(s,k\right)+2\right)-1 &\begin{aligned}
            \quad\text{if}& \ d < k \ \text{and} \ (\sbar\esye\bmod k = 1 \\
            \quad &\text{or} \ d < \sbar \esye \bmod k = k-1)
        \end{aligned} \\
        b\left(H_{d}\left(s,k\right)+1\right)-1 &\begin{aligned}
            \quad\text{if}& \ k > 1 \ \text{and} \ (1 < \sbar \esye \bmod k \le d \\
            \quad &\text{or} \ (d < \sbar \esye \bmod k < k-1) \ \text{or} \ d\geq k).
        \end{aligned}
    \end{cases}
\]
\begin{proof}[Proof of Theorem~\ref{thm-max-hook}]
    By Lemma~\ref{lem-mult-of-b}, we may assume that $c=0$.
    
    \textbf{Case I:} $k=1 \ \text{and} \ d<s$. We have $H_d(s,1) = s-1$ by Theorem~\ref{thm-max-hook-coprime}. First, we prove $H_{bd}(bs,b) \ge b(s+1) - 1$. Since $b \nmid b(s+1) - 1$, we have $b(s+1) - 1 \in \mathcal{P}_{bs, b(s+1)}$. Further, $\ang{b(s+1) - 1}_b = \{bs+b-1, b-1\}$, which is $bd$-distinct.
    
    Now, we prove $H_{bd}(bs,b) \le b(s+1) - 1$. We have
    \[
        \ang{b(s+1)}_b=b\ang{s+1}_1=\{b(s+1), b, 0\},
    \]
    which is not $bd$-distinct. It follows that $\ang{x}_b$ is not $bd$-distinct for any $x \ge b(s+1)$.
    
    \textbf{Case II:} $k=1 \ \text{and} \ d\geq s$. We have $H_d(s,1) = s-1$ by Theorem~\ref{thm-max-hook-coprime}. First, we prove $H_{bd}(bs,b) \ge bs-1$. Since $b \nmid bs-1$, we have $bs - 1 \in \mathcal{P}_{bs, b(s+1)}$. Further, $\ang{bs-1}_b=\{bs-1\}$, which is $bd$-distinct.
    
    Now, we prove $H_{bd}(bs,b) \le bs - 1$. We have
    \[
        \ang{bs}_b=b\ang{s}_1 = \{bs,0\},
    \]
    which is not $bd$-distinct. It follows that $\ang{x}_b$ is not $bd$-distinct for any $x \ge bs$.
    
    \textbf{Case III:} $d < k \ \text{and} \ (\sbar\esye\bmod k = 1 \ \text{or} \ d < \sbar \esye \bmod k = k-1)$. First, we prove $H_{bd}(bs,bk)\geq b(H_d(s,k)+2)-1$. If $\sbar\esye\bmod k = 1$, then $s+k-1 \in \ang{H_d(s,k)}_1$ by Lemma~\ref{lem-best-interval-ideal}. And if $d < \sbar \esye \bmod k = k-1$, then $s-1 \in \ang{H_d(s,k)}_1$ by Lemma~\ref{lem-best-interval-ideal}. In particular,
    \[
        -1\in \{H_d(s,k)-a_1s-a_2(s+k) \mid a_1,a_2 \in \Z_{\ge 0}\}.
    \]
    Since $b(x+2)-1 \ge 0$ if and only if $x \ge -1$ for all $x \in \Z$, we have
    \[
        \ang{b(H_d(s,k)+2)-1}_b = b((\ang{H_d(s,k)}_1 \cup \{-1\}) + 2)-1.
    \]
    Thus, it suffices to prove that $\ang{H_d(s,k)}_1 \cup \{-1\}$ is $d$-distinct, which is equivalent to $[d-1] \cap \ang{H_d(s,k)}_1=\emptyset$. Suppose for the sake of contradiction that $x \in [d-1] \cap \ang{H_d(s,k)}_1$. Then, $x$ is the last element of $\mathcal{L}_x$. If $\sbar\esye\bmod k = 1$, then $\mathcal{L}_x$ is within $d$ of $\mathcal{L}_{k-1}$, which is impossible by Lemma~\ref{lem-best-interval-ideal}. If $d < \sbar \esye \bmod k = k-1$, then by Lemma~\ref{lem-best-interval-ideal}, $x+s \in \ang{H_d(s,k)}_1$, and $0 < |(x+s) - (s-1)| \le d$, contradicting the $d$-distinctness of $\ang{H_d(s,k)}_1$.
    
    Now, we prove $H_{bd}(bs,bk)\le b(H_d(s,k)+2)-1$. If $\sbar\esye\bmod k = 1$, then $s-2 \in \ang{H_d(s,k)}_1$ by Lemma~\ref{lem-best-interval-ideal}. And if $d < \sbar \esye \bmod k = k-1$, then $s+k-2 \in \ang{H_d(s,k)}_1$ by Lemma~\ref{lem-best-interval-ideal}. In particular,
    \[
        -2\in \{H_d(s,k)-a_1s-a_2(s+k) \mid a_1,a_2 \in \Z_{\ge 0}\}.
    \]
    Hence, we have
    \[
        \ang{b(H_d(s,k)+2)}_b = b((\ang{H_d(s,k)}_1 \cup \{-1,-2\}) + 2) \supseteq \{b,0\},
    \]
    which is not $bd$-distinct. It follows that $\ang{x}_b$ is not $bd$-distinct for any $x \ge b(H_d(s,k)+2)$.
    
    \textbf{Case IV:} $k > 1 \ \text{and} \ (1 < \sbar \esye \bmod k \le d \ \text{or} \ (d < \sbar \esye \bmod k < k-1) \ \text{or} \ d\geq k)$. First, we prove $H_{bd}(bs,bk) \ge b(H_d(s,k)+1)-1$. Since $b(x+1)-1 \ge 0$ if and only if $x \ge 0$ for all $x \in \Z$, we have
    \[
        \ang{b(H_d(s,k)+1)-1}_b = b(\ang{H_d(s,k)}_1 + 1)-1.
    \]
    Since $\ang{H_d(s,k)}_1$ is $d$-distinct, $\ang{b(H_d(s,k)+1)-1}_b$ is $bd$-distinct.
    
    Now, we prove $H_{bd}(bs,bk) \le b(H_d(s,k)+1)-1$. If $1 < \sbar \esye \bmod k \le d$ or $d \ge k,s$, then $s-1 \in \ang{H_d(s,k)}_1$ by Lemma~\ref{lem-best-interval-ideal} and Theorem~\ref{thm-max-hook-coprime}. And if $d < \sbar \esye \bmod k < k-1$ or $s > d \ge k$, then $s+k-1 \in \ang{H_d(s,k)}_1$ by Lemma~\ref{lem-best-interval-ideal} and Theorem~\ref{thm-max-hook-coprime}. In particular,
    \[
        -1\in \{H_d(s,k)-a_1s-a_2(s+k) \mid a_1,a_2 \in \Z_{\ge 0}\}.
    \]
    Hence, we have
    \[
        \ang{b(H_d(s,k)+1)}_b = b((\ang{H_d(s,k)}_1 \cup \{-1\}) + 1).
    \]
    Thus, it suffices to prove that $\ang{H_d(s,k)}_1 \cup \{-1\}$ is not $d$-distinct, which is equivalent to $[d-1] \cap \ang{H_d(s,k)}_1 \neq \emptyset$. If $1 < \sbar \esye \bmod k \le d$, then $(\sbar \esye \bmod k) - 1 \in [d-1] \cap \ang{H_d(s,k)}_1$ by Lemma~\ref{lem-best-interval-ideal}. If $d < \sbar \esye \bmod k < k-1$, then $k - (\sbar\esye \bmod k) - 1 \in [d-1] \cap \ang{H_d(s,k)}_1$ by Lemma~\ref{lem-best-interval-ideal}. If $d \ge k,s$, then $s-1 \in [d-1] \cap \ang{H_d(s,k)}_1$ by Theorem~\ref{thm-max-hook-coprime}. Finally, if $s > d \ge k$, then $k-1 \in [d-1] \cap \ang{H_d(s,k)}_1$ by Theorem~\ref{thm-max-hook-coprime}.
\end{proof}

\section*{Acknowledgments}
This project was partially supported by RTG grant NSF/DMS-1745638. It was supervised as part of the University of Minnesota School of Mathematics Summer 2022 REU program. We would like to thank our mentor Hannah Burson and our TA Robbie Angarone for their guidance and for carefully reading multiple drafts of this paper. We also thank Gregg Musiker for organizing this REU.

\bibliography{bibliography}{}
\bibliographystyle{amsplain}

\end{document}